
\documentclass{amsart}
\usepackage[T1]{fontenc}   
\usepackage{amssymb, graphicx, color}
\usepackage{enumerate, float}
\setcounter{page}{1}                   
\setlength{\textwidth}{4.4in}          
\setlength{\textheight}{7.0in}         
\setlength{\evensidemargin}{1in}       
\setlength{\oddsidemargin}{1in}        
\setlength{\topmargin}{.8in}           

\newcommand{\N}{{\mathbb N}}
\newcommand{\R}{{\mathbb R}}

\newcommand{\LL}{{\mathcal L}}

\newcommand{\PP}{{\mathcal P}}

\newcommand{\px}{{2^{\PP(X)}}}

\newcommand{\bigmeet}{{\bigwedge}}

\newtheorem{theorem}{Theorem}[section]
\newtheorem{corollary}[theorem]{Corollary}
\newtheorem{lemma}[theorem]{Lemma}

\theoremstyle{definition}
\newtheorem{definition}[theorem]{Definition}
\newtheorem{example}[theorem]{Example}

\numberwithin{equation}{section}

\begin{document}

\vspace{0.5in}

\renewcommand{\bf}{\bfseries}
\renewcommand{\sc}{\scshape}
\vspace{0.5in}

\title{Metric axioms: a structural study}

\author{J. Bruno}
\address{National University of Ireland, Galway}
\email{brujo.email@gmail.com}

\author{I. Weiss}
\address{University of South Pacific, Suva, Fiji}
\email{weiss\text{\_}i@usp.ac.fj}


\subjclass[2010]{Primary 54X10, 58Y30, 18D35; Secondary 55Z10}



\maketitle

\begin{center} \textbf{Abstract}
\end{center}
\noindent
{\small For a fixed set $X$, an arbitrary \textit{weight structure} $d \in [0,\infty]^{X \times X}$ can be interpreted as a distance assignment between pairs of points on $X$. Restrictions (i.e., \textit{metric axioms}) on the behaviour of any such $d$ naturally arise, such as separation, triangle inequality and symmetry. We present an order-theoretic investigation of various collections of weight structures, as naturally occurring subsets of $[0,\infty]^{X \times X}$ satisfying certain metric axioms. Furthermore, we exploit the categorical notion of adjunctions when investigating connections between the above collections of weight structures. As a corollary, we present several lattice-embeddability theorems on a well-known collection of weight structures on $X$.
 }

\section{Introduction}

For a fixed set $X$, a \textbf{standard metric} on it is any $d \in [0,\infty]^{X\times X}$ for which:

\begin{enumerate}[(i)]
\item $\forall x \in X$, $d(x,x) = 0$
\smallskip
\item $\forall x,y \in X$, $d(x,y) = 0 = d(y,x) \Rightarrow x = y$ (\emph{separation})
\smallskip
\item $\forall x,y \in X$, $d(x,y) = d(y,x)$ (\emph{symmetry})
\smallskip
\item $\forall x,y,z \in X$, $d(x,y) + d(y,z) \geq d(x,z)$ (\emph{triangle inequality}).\\
\end{enumerate}

The collection of all standard metrics on $X$ can then be identified with a particular subset of $ [0,\infty]^{X\times X}$. For convenience, we shall refer to axioms (i), (ii),  (iii) and (iv) as $0, s, \Sigma$ and $\Delta$ respectively. By letting $P$ denote any collection of these axioms and $W_P(X) = \{d \in  [0,\infty]^{X\times X} \mid d \text{ satisfies all axioms in } P\}$, it is also possible to view any such collection of axioms as a subset of the ambient set. For convenience, we suppress $X$ from this notation where there is no danger of ambiguity.  In particular, $W_\emptyset = [0,\infty]^{X\times X}$. Clearly, for $P, Q$ (two collections of axioms) one has that $W_{P \cap Q} \supseteq W_Q \cup W_P$ and $W_{P \cup Q} = W_Q \cap W_P$.  For simplicity, we suppress the use of brackets where $P$ is used as a subscript (e.g., if $P=\{0\}$ then $W_P =W_0$). We refer to any $d \in  [0,\infty]^{X\times X}$ as a \textit{weight structure}.
For reasons that will become apparent in the sequel, this paper is concerned with weight structures that satisfy axiom 0.

The ambient set $ [0,\infty]^{X\times X}$ when ordered pointwise (i.e., $d\leq m \in  [0,\infty]^{X\times X} \Leftrightarrow \forall x,y \in X$, $d(x,y) \leq m(x,y)$) forms a complete lattice; every collection $W_P$ becomes a poset and the obvious inclusions $W_P \hookrightarrow W_Q$ (for $Q\subseteq P$) acquire more structure (that of order-preserving functions). The first part of Section~\ref{sec:met(X)} is concerned with the lattice-theoretic structure of the above defined collections. In particular, we explore a connection between certain first-order predicates and the order-theoretic collection of weight structures they define. The rest of Section~\ref{sec:met(X)} presents a categorical investigation of the inclusions $W_P \hookrightarrow W_Q$. For instance, given any weight structure that does not satisfy symmetry, it is then natural to ask:
\smallskip

\textit{ Is there a way to naturally symmetrize such an element? Is such a process unique? If not, is there an optimal one?}
\smallskip

Expressed in the language of categories, we provide answers to the above by means of adjunctions in much the same spirit as how the process of turning a base into a topology can be seen as an adjunction. A comprehensive diagrams of such adjunctions can be found in Figure~\ref{allads} where for $M \in \{0,s,\Sigma,\Delta\}$ and $M\not \in P \subset \{0,s,\Sigma,\Delta\}$ the symbols $M_*$ (resp. $M_!$) denote the right (resp. left) adjoint to the inclusion $W_{P\cup M} \hookrightarrow W_P$.
\begin{figure}\label{allads}
\vspace{.2cm}
\centering 
\def\svgwidth{200pt} 
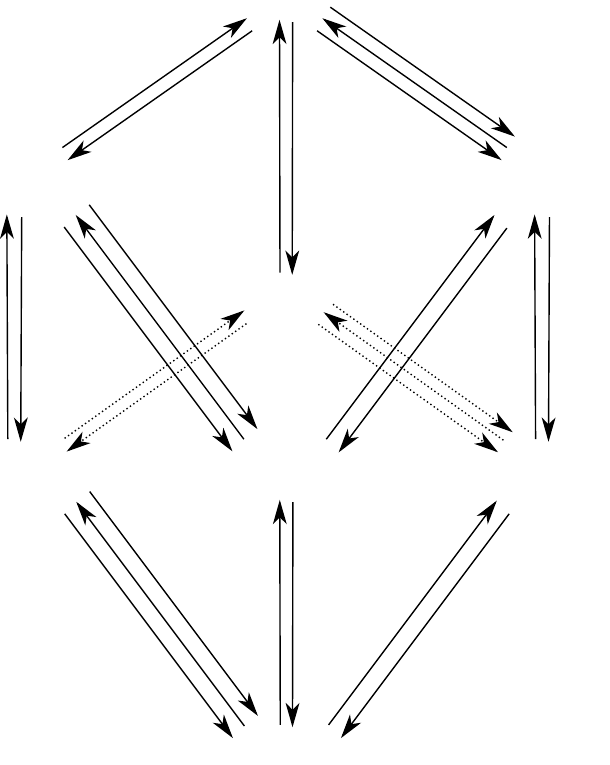
\caption{Adjunctions}
 \end{figure}

For $A\subseteq X$ and $d \in W_P$, $A$ is {\it left $P$-open} (resp. {\it right $P$-open}) provided that for any $x \in A$ one can find $\epsilon > 0$ so that $B_L(x)_\epsilon = \{y \in X \mid d(x,y) < \epsilon\} \subseteq A$ (resp. $B_R(x)_\epsilon = \{y \in X \mid d(y,x) < \epsilon\} \subseteq A$). A standard way to generate a topology from any $d \in W_P$ is given by: $O \in \tau_L$ (resp. $O \in \tau_R$) iff $O$ is left $P$-open (resp.  $O$ is right $P$-open). In other words,

$$ d \mapsto ( \tau_L, \tau_R).$$

\smallskip

A different way to generate topologies from any $d\in W_P$ is by mapping $d$ to the covers its right and left $\epsilon$-balls ($B_R(x)_\epsilon$ and $B_L(x)_\epsilon$, respectively) generate. From there one considers such collections as subbases. More precisely,
$$d \mapsto (\langle\{B_L(x)_\epsilon \mid \epsilon > 0\}\rangle,\langle \{B_L(x)_\epsilon \mid \epsilon > 0\}\rangle).
$$

Notice that both methods coincided if, and only if, $\Delta \in P$. Since $Top(X)$ (the collection of all topologies on a fixed set $X$) is a complete lattice (see \cite{MR3003323} and \cite{MR2944774}), the above assignments can be interpreted as order-theoretic functions $W_P \to Top(X) \times Top(X)$. Section~\ref{sec:propsi} is devoted to a detailed investigation of the above functions with particular emphasis on meet/join preservation, structure of fibers and order-preservation.

Lastly, Section~\ref{sec:metx} is concerned with the lattice-theoretic structure of the collection of all extended metrics $Met(X)= W_{0,\Delta,\Sigma}$ on a fixed set $X$. We explore Menger convexity (and its dual) within $Met(X)$ along with several lattice-embeddability properties of $Met(X)$.

\section{Preliminaries}

For a partial order $\mathbb{P}$, we adopt the name \emph{meet semilattice} (resp. \emph{join semilattice}) whenever $\mathbb{P}$ is closed under all finite meets (resp. finite joins). In particular, if $\mathbb{P}$ is a meet (resp. join) semilattice, then the empty meet (resp. join) must exist within $\mathbb{P}$; vacuously, $\bigvee \emptyset = \bot$ and $\bigwedge \emptyset =\top$ and, consequently, meet and join semilattices have top and bottom elements, respectively. We will refer to $\mathbb{P}$ as a \emph{lattice} provided it is a meet and join semilattice. Lastly, $\mathbb{P}$ is a \emph{complete lattice} whenever it contains all meets and joins. Following the above definitions, it is not necessary to distinguish join and meet completeness since both notions yield complete lattices. That is, if P is a complete join semilattice (i.e., it has all joins), then it also has all meets, and thus is complete. For lattices $\mathbb{P}$ and $\mathbb{Q}$, an order-preserving map $f:\mathbb{P}\to \mathbb{Q}$ is said to \emph{preserve meets} or be \textit{closed under meets }(resp. \emph{preserve joins} or \textit{closed under joins}) iff for any pair $x,y \in \mathbb{P}$ we have $f(x \wedge y) = f(x) \wedge f(y)$ (resp. $f(x \vee y) = f(x) \wedge f(y)$). Whenever $f$ is injective and preserves all joins and all meets, $f$ is said to be an \emph{embedding} of $\mathbb{P}$ into $\mathbb{Q}$ and that $\mathbb{P}$ can be \emph{embedded} within $\mathbb{Q}$. Given a function $g:X\to Y$ ($X$ and $Y$ sets) we adopt the standard meaning of fibers of functions; for $y \in Y$ the \emph{fiber of $g$ over $y$}  is just $\{x\in X \mid g(x) = y\} = g^{-1}(y)$. For $\mathbb{P}$ a complete lattice and $A \subset \mathbb{P}$, $A$ is a \emph{sublattice} (resp. \emph{complete sublattice}, \emph{complete join sublattice},  \emph{complete meet sublattice}) if the inclusion function is closed under all finite meets and joins (resp. closed under all meets and joins, closed under all joins, closed under all meets). We note that the latter four are different notions.

All of our basic terminology regarding categories is standard and can be found in \cite{MR1712872}. For convenience, we will recall that given functors $F: \textbf{C} \to \textbf{D}$ and $G: \textbf{D} \to \textbf{C}$, $F$ is said to be \emph{left adjoint} to $G$ (and $G$ is \emph{right adjoint} to $F$) provided that there exists a natural bijection (in the variables $X$ and $Y$) between morphisms $f:X \to G(Y)$ in $\textbf{C}$ and $\overline{f}:F(X) \to Y$ in $\textbf{D}$. The pair $F,G$ is commonly referred to as an \emph{adjunction} and we denote the adjunction by $G:D\leftrightarrows C:F$. In the sequel we frequently describe posets as categories where arrows point in the direction of the order. In other words, for $a,b \in \mathbb{P}$ (a poset) we have $a \to b$ iff $a\leq b$ and order-preserving maps become functors.

For a first-order language $\LL$, formula $\phi(x_,\ldots,x_n)$ from $\LL$ and structure $M$, the $n$-tuple $\langle a_1, \ldots , a_n\rangle \in M^n$ is said to {\it satisfy} $\phi(x_,\ldots,x_n)$ provided
$$ \models_M \phi(a_1, \ldots, a_n).$$

The set $\phi_M = \{\langle a_1, \ldots , a_n\rangle \mid  {\models_M\phi(a_1, \ldots, a_n)}\}$ is said to be {\it defined} by $\phi(x_1, \ldots, x_n)$. If $\phi_M = N^n$ for some $N\subseteq M$, then $N$ is also said to be {\it defined} by $\phi(x_1,\ldots,x_n)$.

\section{Substructures of $[0,\infty]^{X \times X}$}\label{sec:met(X)}

We begin this section by investigating the hierarchy of weight structures $W_P$ in the lattice $[0,\infty]^{X \times X}$, exploring their lattice-theoretic and topological properties and connections between the two.

Elements of $Met(X)$ are called \emph{metric structures} on $X$ while $Top_M(X)$ will denote the collection of topologies generated by elements from $Met(X)$.

\subsection{Compactness and Completeness within $[0,\infty]^{ X\times X}$}

As a topological space, we consider $[0,\infty]$ as the one-point compactification of $[0,\infty)$. It follows that the product space $[0,\infty]^{ X\times X}$ is compact and Hausdorff, and compact sets
and closed sets coincide within $[0,\infty]^{ X\times X}$. Given an open subset $O$ of $[0,\infty]$, and $a \in X$, we denote by $O_{a}$
the subbasic open set consisting of all functions whose evaluation of $a$
belongs to $O$. Recall that the interval topology on a lattice $L$ is the one generated
by the rays $x^{\uparrow}=\{y\in L\mid x\leq y\}$, $x^{\downarrow}=\{y\in L\mid x\geq y\}$
(for all $x\in L$) as subbasic closed sets.

\begin{theorem} The product topology on
$[0,\infty]^{ X\times X}$ is exactly the same as the interval topology on
it. Moreover, any sublattice $L$ of $[0,\infty]^{ X\times X}$ is compact iff $L$ is closed iff $L$ is complete.
\end{theorem}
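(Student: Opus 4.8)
The plan is to prove the two assertions in turn. The equivalence ``compact iff closed'' will be immediate, so the real work is (a) matching the two topologies and (b) tying closedness to completeness; throughout I read ``complete'' in the strong sense of a \emph{complete sublattice} (closed under all meets and joins inherited from the ambient lattice), since this is what the statement needs.

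For the topological identity, recall that $[0,\infty]$, as the one-point compactification of $[0,\infty)$, is a compact chain whose topology is its order topology, with subbasic open sets $[0,c)$ and $(c,\infty]$. To see the interval topology is coarser than the product topology, note that each subbasic interval-closed ray is product-closed: for $f\in [0,\infty]^{X\times X}$ one has $f^{\uparrow}=\bigcap_{a}\pi_a^{-1}([f(a),\infty])$ and dually for $f^{\downarrow}$, each being an intersection of preimages of closed rays under the continuous projections $\pi_a$. For the reverse inclusion I would exhibit the complement of each subbasic product-open set as a single lattice ray: $\{g\mid g(a)\geq c\}=h^{\uparrow}$ where $h$ equals $c$ at $a$ and $0$ elsewhere, and $\{g\mid g(a)\leq c\}=k^{\downarrow}$ where $k$ equals $c$ at $a$ and $\infty$ elsewhere. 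The padding by $0$ and $\infty$ works precisely because these are the bottom and top of $[0,\infty]$; this pins each coordinate constraint to one ray, giving product $\subseteq$ interval, hence equality.

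For the second assertion, the ambient space is compact Hausdorff, so ``compact iff closed'' holds for every subset and only ``closed iff complete'' remains. For closed $\Rightarrow$ complete, given $S\subseteq L$ I would form the net of finite joins $\bigjoin F$ indexed by the finite subsets $F\subseteq S$ directed by inclusion; each lies in $L$ since $L$ is a sublattice, and coordinatewise $(\bigjoin F)(a)=\max_{f\in F}f(a)$ increases to $(\bigjoin S)(a)$. As any basic neighbourhood of $\bigjoin S$ constrains only finitely many coordinates, the net converges to $\bigjoin S$ in the product topology, so closedness forces $\bigjoin S\in L$. The dual net of finite meets gives $\bigmeet S\in L$, so $L$ is a complete sublattice, and in particular complete.

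For complete $\Rightarrow$ closed, I would test closedness on nets. Let $(f_i)_{i\in I}$ be a net in $L$ with $f_i\to p$. Using closure under meets, set $g_i=\bigmeet\{f_j\mid j\geq i\}\in L$; these form an increasing net, so $p'=\bigjoin\{g_i\mid i\in I\}\in L$ by closure under joins. Evaluating at a coordinate $a$ gives $p'(a)=\sup_i\inf_{j\geq i}f_j(a)=\liminf_i f_i(a)$, which equals $\lim_i f_i(a)=p(a)$ because the net converges in the compact chain $[0,\infty]$. Hence $p=p'\in L$ and $L$ is closed. I expect the crux to be exactly this last direction: one must reconstruct the topological limit of an arbitrary convergent net purely from the lattice operations (a ``$\liminf$'' assembled from tail-meets and their join) and verify coordinatewise that it is the genuine limit. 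This is also where the strong reading of completeness is indispensable, since an abstractly complete sublattice need not be closed (for instance $[0,\tfrac12)\cup\{1\}$ in $[0,1]\cong[0,\infty]$).
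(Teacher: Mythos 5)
Your proposal is correct, and its first half is essentially the paper's own argument: both directions of the topology comparison rest on the same padding trick, namely that the complement of the subbasic product-open set $\{g \mid g(a)<c\}$ is the up-ray of the function equal to $c$ at $a$ and $0$ elsewhere, and dually with $\infty$-padding for down-rays. The second half is where you genuinely diverge. The paper invokes Frink's theorem (a lattice is compact in its interval topology iff it is complete) together with the observation that on a complete sublattice $L$ the subspace topology coincides with the interval topology of $L$, via $f^{\uparrow}\cap L=g^{\uparrow}\cap L$ for $g=\bigwedge\{h\in L\mid h\geq f\}$; you replace all of this by two direct net computations: the net of finite joins (resp.\ meets) of a subset of a closed sublattice converges coordinatewise to the ambient join (resp.\ meet), giving closed $\Rightarrow$ complete, and the limit of any convergent net in a complete sublattice is recovered lattice-theoretically as the coordinatewise $\liminf$, $\bigvee_i\bigwedge_{j\geq i}f_j$, giving complete $\Rightarrow$ closed. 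What your route buys is self-containment and precision: it avoids the external citation, it never needs to compare the subspace topology with the intrinsic interval topology of $L$, and it forces the correct reading of ``complete'' as \emph{complete sublattice}, which your example $[0,1/2)\cup\{1\}$ (equivalently $[0,1)\cup\{\infty\}$ in $[0,\infty]$ when $|X|=1$) shows is not optional, since an abstractly complete sublattice can fail to be closed. Indeed, read strictly, the paper's Frink-based argument yields ``complete sublattice $\Rightarrow$ closed'' but in the other direction only ``closed $\Rightarrow$ abstractly complete,'' so your finite-join net computation is exactly the step that closes the cycle of equivalences under the strong reading; in that sense your proof is not merely an alternative but the more watertight of the two.
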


\begin{proof}
Notice that rays are closed in the product space $[0,\infty]^{ X\times X}$ for if we let $f\in[0,\infty]^{ X\times X}$
and take any $h\in [0,\infty]^{ X\times X}\smallsetminus f^{\uparrow}$, then
for some $x\in X$ we have that $h(x)<f(x)$ and we can create an
$\epsilon-$interval around $h(x)$ so that $h(x)-\epsilon<h(x)+\epsilon<f(x)$.
It follows that $(h(x)-\epsilon,h(x)+\epsilon)_{x}\cap f^{\uparrow}=\emptyset$,
where $(h(x)-\epsilon,h(x)+\epsilon)_{x}=\{g \in[0,\infty]^{ X\times X}\mid g(x)\in(h(x)-\epsilon,h(x)+\epsilon)\}$
is a basic open set, and so $f^{\uparrow}$ is closed in the product topology. A similar argument shows that $f^{\downarrow}$ is closed in the product topology. The other inclusion works as follows: take a subbasic open set $O_{a}$ in the product topology (where we can assume
$O=[0,d)$ or $(c,\infty]$). Since the collection of rays (as explained above) generates
all closed sets in the interval topology, the collection of complements
of rays generates all open sets. With that in mind, consider $f_{c},f_{d}\in [0,\infty]^{ X\times X}$
so that:

\[
f_{c}(x)=\begin{cases}
c & x=a\\
0 & \mbox{otherwise.}
\end{cases}
\]

\[
f_{d}(x)=\begin{cases}
d & x=a\\
\infty & \mbox{otherwise.}
\end{cases}
\]

\medskip

The claim is that $[0,c)_{a}= [0,\infty]^{ X\times X}\smallsetminus f_{c}^{\uparrow}$ and $(d,\infty]_a = [0,\infty]^{ X\times X}\smallsetminus f_{d}^{\downarrow}$.
Clearly, $[0,c)_{a}\subseteq (f_{c}^{\uparrow})^{C}$ and $(d,\infty]_a \subseteq (f_{d}^{\downarrow})^{C}$.
If $h\in(f_{c}^{\uparrow})^{C}$ and since $h\not\geq f_{c}$, then
 it must be that $h(a)<f_{c}(a)=c$ (since $h(x)\geq f_{c}(x)$
for all other $x\not=a$) and thus equality follows. The same happens with $f_{d}$. The second claim is due in part to Frink (\cite{MR0006496}, where he shows that any lattice is compact in its interval topology if, and only if, it is complete) and, partly, to the subspace topology on a complete sublattice of $[0,\infty]^{ X\times X}$ being the same as the interval topology. Indeed, take any $f \in [0,\infty]^{ X\times X}$ and a complete sublattice $L$; since $L$ is complete, $\bigmeet \{ h \in L \mid h \geq f\} = g$ exists and thus $g^{\uparrow}\cap L = f^{\uparrow}\cap L$.

\end{proof}

Since $W_\emptyset = [0,\infty]^{ X\times X}$ we will use $W_\emptyset$ to denote our ambient structure $[0,\infty]^{ X\times X}$. Clearly, $W_0$ and $W_\Sigma$ are sublattices of $W_\emptyset$, $W_s$ is closed under non-empty meets and joins from $W_\emptyset$, and $W_\Delta$ is closed under arbitrary joins from $W_\emptyset$. If we consider two weight structures $d,d'\in Met(X)\cap W_s$ so that for a triple $x,y,z\in X$, $d(x,z)=d'(x,z)$ and $d(x,y)=d'(y,z)<\frac{d(x,z)}{2}$, then $d\wedge d'\not \in W_\Delta$. Consequently, neither $Met(X)$ nor $W_\Delta$ is closed under finite meets. Notice, also, that for any $d \in W_\emptyset$ there is a unique $d'\in W_\emptyset$ for which $d(x,y) = d'(y,x)$ and we refer to $d'$ as the {\it dual} of $d$. Of particular importance is to notice that for any $W_P$, the function $f:W_P \to W_P$ for which $d \mapsto d'$ is an order isomorphism with $f^2 = id$.

Let $\mathcal{{L}}$ be a first-order language on \textup{$X\times[0,\infty]$} (i.e., Boolean algebras, lattices, etc) and $\phi(x_1,\ldots, x_n)$ a formula from the language. A set $\mathcal{{F}\subseteq}[0,\infty]^{X}$
is said to be \textit{expressed} in $\mathcal{{L}}$ via $\phi(x_1,\ldots, x_n)$ provided
$$\mathcal{{F}}=\{F\subset X\times[0,\infty] \mid \forall \langle a_1,\ldots, a_n\rangle \in F^n \mbox{, } {\models_F\phi(a_1,\ldots, a_n)}\}.$$
\noindent
In other words, $\mathcal{F}$ is expressed by the sentence $\forall x_1\ldots x_n \phi(x_1, \ldots x_n)$. A formula $\phi(x_1,\ldots, x_n)$ from the language $\mathcal{L}$ is said to be \emph{relaxed} if for any collection $A=\{p_{i}=((x_{i},y_i),a_{i})\}_{0\leq i\leq n}$
(where $x_{i}\in X$ and $a_{i}\in[0,\infty]$) for which $\not \models_A \phi(p_1,\ldots, p_n)$ there exists $\epsilon>0$ so
that for any other collection $B=\{q_{i}=((w_{i},z_i),b_{i})\mid a_{i}-\epsilon<b_{i}<a_{i}+\epsilon\}_{0\leq i\leq n}$
we have that $\not\models_B \phi(q_1,\ldots, q_n)$.

\begin{lemma}
Let $\mathcal{{L}}$ be as above. If a subset of $W_\emptyset$ is expressible by a relaxed quantifier-free formula from $\mathcal{\mathcal{{L}}}$,
then it is compact.
\end{lemma}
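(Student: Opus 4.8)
The plan is to prove compactness by showing that any subset $\mathcal{F} \subseteq W_\emptyset$ expressed by a relaxed quantifier-free formula is \emph{closed} in the product topology, and then invoke the fact (from the previous theorem) that closed and compact coincide in $[0,\infty]^{X\times X}$. So the real content reduces to establishing closure. Since the product topology equals the interval topology, I could work with either, but closure in the product topology seems most direct to connect to the ``relaxed'' hypothesis, whose $\epsilon$-perturbation clause is manifestly a statement about the product (coordinatewise) topology.

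First I would set up the complement argument. Let $\mathcal{F}$ be the set expressed by $\phi$, so that $d \in \mathcal{F}$ iff for every tuple $\langle p_1,\ldots,p_n\rangle$ drawn from (the graph of) $d$ we have $\models_d \phi(p_1,\ldots,p_n)$. Take $d \notin \mathcal{F}$. Then there is at least one witnessing tuple $A = \{p_i = ((x_i,y_i),a_i)\}$ with $a_i = d(x_i,y_i)$ for which $\not\models_A \phi(p_1,\ldots,p_n)$. This is exactly the situation the definition of \emph{relaxed} addresses: it hands me an $\epsilon > 0$ such that any tuple $B = \{q_i = ((w_i,z_i),b_i)\}$ with each $b_i$ within $\epsilon$ of the corresponding $a_i$ also fails $\phi$.

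Next I would convert that $\epsilon$ into a basic open neighborhood of $d$ in the product topology that misses $\mathcal{F}$. The natural candidate is
\[
U = \bigcap_{i=1}^n \big( a_i - \epsilon, a_i + \epsilon \big)_{(x_i,y_i)},
\]
a finite intersection of subbasic open sets, hence basic open, and containing $d$ since $d(x_i,y_i) = a_i$. The key step is to verify $U \cap \mathcal{F} = \emptyset$: if $m \in U$, then $m(x_i,y_i) \in (a_i - \epsilon, a_i + \epsilon)$ for each $i$, so the tuple $B$ built from $m$ at the same coordinate pairs $(x_i,y_i)$ satisfies the perturbation hypothesis, whence $\not\models \phi$ on this tuple of $m$; therefore $m \notin \mathcal{F}$. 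This exhibits an open neighborhood of each non-member disjoint from $\mathcal{F}$, so the complement is open and $\mathcal{F}$ is closed, hence compact.

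I expect the main obstacle to be bookkeeping rather than conceptual: I must be careful that ``quantifier-free'' is genuinely used, since it guarantees that whether $d \in \mathcal{F}$ is determined by a condition ranging over tuples of its own graph (no nested quantifiers that could reference points outside the witnessing tuple), and that the failure of $\phi$ at a single tuple $A$ certifies $d \notin \mathcal{F}$. A secondary subtlety is matching the coordinates correctly: the relaxed condition perturbs the \emph{values} $a_i$ while I want to perturb the \emph{weight structure} $m$ at fixed pairs $(x_i,y_i)$, so I should hold the first coordinates fixed and only vary the numerical weights, which is precisely what the subbasic sets $(a_i-\epsilon,a_i+\epsilon)_{(x_i,y_i)}$ do. Once the dictionary between perturbed tuples and nearby weight structures is pinned down, the argument closes cleanly.
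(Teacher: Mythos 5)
Your proposal is correct and follows essentially the same route as the paper's own proof: both take a non-member, use quantifier-freeness to extract a failing witness tuple, invoke relaxedness to obtain $\epsilon>0$, and package the perturbation clause as the basic open set $\bigcap_{i\leq n}(a_i-\epsilon,a_i+\epsilon)_{(x_i,y_i)}$ separating the non-member from the expressed set, concluding closedness and hence compactness. Your remark about holding the pairs $(x_i,y_i)$ fixed while perturbing only the values is a sound (and slightly more careful) reading of the relaxedness definition, which in the paper nominally allows the points to vary as well.
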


\begin{proof}
Let $\phi(x_{1},...,x_{n})$ be a relaxed well-formed atomic formula.
Define $\phi^{\ast}=\forall x_{1},...,x_{n}\phi(x_{1},...,x_{n})$
and let $F=\{f\subseteq X\times[0,\infty] \mid \models_f \phi^{\ast}\}$.
If $g\not\in F$ we can find  $p_{1}=((x_1,y_{1}),a_{1}), \ldots , p_{n}=((x_n,y_{n}),a_{n})\in g$ (since $\phi$ is quantifier free)
so that $\not \models_g\phi(p_{1}, \ldots , p_{n})$. Since $\phi$ is relaxed, there exists an $\epsilon>0$ for which any
$h \subseteq X\times[0,\infty]$ so that $h \supset \{q_{i}=((w_{i},z_i),b_{i})\mid a_{i}-\epsilon<b_{i}<a_{i}+\epsilon\}_{0\leq i\leq n}$ is such that $\not\models_h\phi(q_1,\ldots,q_n)$.
The collection of all such $h$ is a standard basic open set containing
$g$ and disjoint from $F$. Indeed, it can be written as $\bigcap_{i\leq n} (a_i - \epsilon, a_i + \epsilon)_{x_i}$. Hence, $F$ is closed and thus compact.
\end{proof}

\begin{corollary}
Let $\mathcal{{L}}$ be as described above. If a subset of $W_\emptyset$ is expressible by a collection of relaxed quantifier-free formulas from $\mathcal{\mathcal{{L}}}$,
then it is compact.
\end{corollary}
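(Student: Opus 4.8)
The plan is to reduce to the single-formula case already handled by the Lemma, the point being that a subset expressed by a \emph{collection} of formulas is nothing but the intersection of the subsets expressed by its members.

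First I would fix notation: write the given collection as $\{\phi_i\}_{i\in I}$, let $\mathcal{F}_i\subseteq W_\emptyset$ be the subset expressed by $\phi_i$, and let $\mathcal{F}\subseteq W_\emptyset$ be the subset expressed by the whole collection. Unwinding the definition of \emph{expressed}, a weight structure $f$ belongs to $\mathcal{F}$ precisely when $f$ satisfies the universal closure $\phi_i^{\ast}$ for every $i\in I$; since this is the conjunction, over $i$, of the conditions defining the individual $\mathcal{F}_i$, one obtains
$$\mathcal{F}=\bigcap_{i\in I}\mathcal{F}_i.$$
This is the only step that requires any care, as it is here that one must confirm that the defining condition for a collection is genuinely a conjunction (hence an intersection) rather than some other Boolean combination of the $\mathcal{F}_i$.

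With this in hand the rest is immediate. By the Lemma each $\mathcal{F}_i$ is compact, and since we have already observed that $W_\emptyset=[0,\infty]^{X\times X}$ is compact Hausdorff with compact and closed subsets coinciding, each $\mathcal{F}_i$ is in fact closed. An arbitrary intersection of closed sets is closed, so $\mathcal{F}=\bigcap_{i\in I}\mathcal{F}_i$ is closed; being a closed subset of the compact space $W_\emptyset$, it is compact. I anticipate no genuine obstacle here: all the substantive content sits in the Lemma, and the corollary is simply its closure-under-arbitrary-intersection upgrade.
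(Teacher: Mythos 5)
Your proof is correct and is essentially the paper's own argument: the paper's one-line proof invokes exactly the coincidence of compact and closed sets in $W_\emptyset$, so that the set expressed by a collection of formulas is an intersection of closed sets, hence closed, hence compact. Your write-up merely makes the intersection step and the closed-subset-of-compact-space step explicit.
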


\begin{proof} This is a simple consequence of compact and closed sets being identical in $W_\emptyset$.
\end{proof}

Let $\textbf{P}$ be a unary predicate symbol for $\LL$ for which $\textbf{P}[(x,y),a)]$ is true whenever $x=y$. Further, let {\bf Q} be also unary so that $\textbf{Q}[(x,y),a)]$ is true precisely when $a=0$. By letting $\phi_0(v)$:= ``$\textbf{P}(v)\Rightarrow\textbf{Q}(v)$'' we get that $W_0$ is compact and thus a complete sublattice of $W_\emptyset$.
The same is true for $W_{\Sigma}$ since we can define the
relaxed formula $\phi_{\Sigma}(w,v):=$ ``$w=((x,y),a)$
and $v=((y,x),b)\Rightarrow a=b$'' and let $\overline{\phi_{\Sigma}}:=\forall w,v(\phi_{\Sigma}(w,v))$
be the sentence expressing $W_{\Sigma}$; notice that ``$w=((x,y),a)$
and $v=((y,x),b)$'' and ``$a=b$'' can both be defined by binary predicate symbols. As for $W_{\Delta}$
let $\phi_{\Delta}(u,v,w):=$ ``$u=((x,y),a),v=((y,z),b)$ and $z=((z,x),c)\Rightarrow a+b\geq c$''
and
$$
\overline{{\phi}}_{\Delta}:=\forall x,y,z\phi_{\Delta}(u,v,w)
$$

\smallskip
\noindent
then $\overline{{\phi}}_{\Delta}$ expresses $W_{\Delta}$. Hence, all
combinations thereof are closed and compact within $W_\emptyset$. The same
is not true of $W_{s}$. We can define $\phi_{s}(v)$ to be
``$v=((x,y),0)\Rightarrow x=y$" but this is not a relaxed formula. As a matter of fact $W_{s}$ is not closed in $W_\emptyset$. Indeed, if we take any collection of weight structures $\{d_i\}_{i\in\R^+}\subset W_s$ for which there exists a pair $x,y\in X$ so that $d_i(x,y)=i$ then $\bigwedge d_i\not \in W_s$. That said, $W_s$ is indeed closed under non-empty joins and finite meets from the ambient lattice.

\subsection{Adjunctions}\label{adjunctions}

Consider $2^{\mathcal{P}(X)}=\mathcal{P}(\mathcal{P}(X))$
for a fixed set $X$. Here, meets
are intersections and joins are unions. Next, say that a collection
$C\in2^{\mathcal{P}(X)}$ is a \emph{cover }of $X$ if
\[
\bigvee_{c\in C}c=X.
\]
Define $Cov(X)$ to be the subset of $2^{\mathcal{P}(X)}$ consisting
of the covers of $X$. It is closed under all non-empty joins from $2^{\mathcal{P}(X)}$. The subset of
 $\px$
consisting of the bases for a topology on $X$ will be denoted by $Base(X)$ and $Top(X)$ will denote
the set of all topologies on $X$. The latter is closed under all non-empty meets from $Base(X)$. Since every topology is a base, every base is a cover we get the inclusion sequence $Top(X)\hookrightarrow Base(X) \hookrightarrow Cov(X) \hookrightarrow \px$. It is possible to move back along the previous sequence by, for instance, taking an element from $\px$ and adding $X$ to it to get a cover. This cover is turned into a base by closing it under finite intersection (including the empty intersection) and turning this base into a topology in the usual way. Hence, omitting $(X)$, we get the following diagram\\

\begin{figure}[H] 
\vspace{.2cm}
\centering 
\def\svgwidth{220pt} 
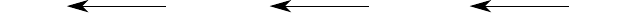
 \end{figure}

 A number of the above constructions can be viewed as adjunctions: for instance, if we consider $Top(X)$ and $Base(X)$ as categories (where arrows point in the direction of the order; $a \rightarrow b$ iff $a\leq b$) then generating a topology from a base amounts to taking a base and mapping it to the meet of all topologies containing it. That is, take $b \in Base(X)$ with the inclusion map $Top(X) \hookrightarrow Base(X)$ and notice that $F:Base(X) \to Top(X)$ so that
$$
\displaystyle b\mapsto \bigwedge_{b\leq a \in Top(X)}
$$
is the functor that generates a topology from a given base (i.e., the left adjoint to the inclusion mapping). This is a special case of the following theorem (cf. \cite{MR861951} pg. 26).

\begin{theorem}\label{thm:johnstone} Let $g: \mathbb{P} \to \mathbb{Q}$ be an order-preserving map between posets. Then
\begin{enumerate}
\item if $g$ has a left adjoint (resp. right adjoint) $f:\mathbb{Q} \to \mathbb{P}$, then $g$ preserves all meets (resp. joins) that exist in $\mathbb{P}$.
\item if $\mathbb{P}$ has all meets (resp. joins) and $g$ preserves them, then $g$ has a left (resp. right) adjoint.
\end{enumerate}
In particular, if $\mathbb{P}$ and $\mathbb{Q}$ are complete lattices then $g$ has a left (resp. right) adjoint iff $g$ preserves all meets (resp. joins).
\end{theorem}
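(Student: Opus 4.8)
The plan is to recognize this as the order-theoretic (Galois-connection) form of the maxim ``right adjoints preserve meets, left adjoints preserve joins,'' together with the adjoint functor theorem for meet/join-complete posets. Throughout I would first unwind the categorical definition of adjunction in the poset setting recalled in the Preliminaries: since arrows point along the order, the statement that $f: \mathbb{Q} \to \mathbb{P}$ is left adjoint to $g: \mathbb{P} \to \mathbb{Q}$ is exactly the equivalence $f(q) \leq p \iff q \leq g(p)$, valid for all $q \in \mathbb{Q}$ and $p \in \mathbb{P}$. I would also observe that it suffices to treat the ``meet / left adjoint'' assertions, because the parenthesized ``join / right adjoint'' assertions follow verbatim by passing to the order-duals $\mathbb{P}^{\mathrm{op}}, \mathbb{Q}^{\mathrm{op}}$: dualizing the order turns meets into joins and, as a check on the above equivalence shows, turns a right adjoint of $g$ into a left adjoint of $g^{\mathrm{op}}$.

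For part (1), I would assume $g$ has a left adjoint $f$ and take any $S \subseteq \mathbb{P}$ whose meet $\bigwedge S$ exists in $\mathbb{P}$. Monotonicity of $g$ gives that $g(\bigwedge S)$ is a lower bound of $\{g(s) : s \in S\}$. To promote this to the \emph{greatest} lower bound, I would take an arbitrary lower bound $q \in \mathbb{Q}$ (so $q \leq g(s)$ for every $s$) and run the adjunction equivalence forwards: $q \leq g(s)$ yields $f(q) \leq s$ for all $s$, hence $f(q) \leq \bigwedge S$; applying the equivalence backwards then returns $q \leq g(\bigwedge S)$. Therefore $g(\bigwedge S) = \bigwedge_{s} g(s)$, which is precisely preservation of the meet.

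For part (2), I would assume $\mathbb{P}$ has all meets and that $g$ preserves them, and propose the canonical candidate $f(q) := \bigwedge \{p \in \mathbb{P} : q \leq g(p)\}$, which is well-defined by meet-completeness of $\mathbb{P}$. It then remains to verify the defining equivalence $f(q) \leq p \iff q \leq g(p)$. The direction ``$q \leq g(p) \Rightarrow f(q) \leq p$'' is immediate, since such a $p$ lies in the set being met. The decisive step—and the only point at which the hypothesis is genuinely used—is to establish $q \leq g(f(q))$: here meet-preservation of $g$ rewrites $g(f(q))$ as $\bigwedge \{g(p) : q \leq g(p)\}$, a set of which $q$ is by construction a lower bound, so $q \leq g(f(q))$; monotonicity of $g$ then upgrades $f(q) \leq p$ to $q \leq g(f(q)) \leq g(p)$.

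Finally, the ``in particular'' clause is an immediate synthesis: when $\mathbb{P}$ and $\mathbb{Q}$ are complete lattices, $\mathbb{P}$ automatically possesses all meets (resp. joins), so (1) and (2) combine into the stated biconditionals. I do not expect a real obstacle here, as the result is standard; the only care required is bookkeeping the direction of the adjunction correctly through the poset-as-category dictionary, and noting that in part (2) it is meet-\emph{preservation}, not mere monotonicity, that powers the inequality $q \leq g(f(q))$.
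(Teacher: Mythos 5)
Your proof is correct, but there is nothing in the paper to compare it against: the paper does not prove this theorem at all, it quotes it as a known fact with a citation to Johnstone's \emph{Stone Spaces} (p.~26). Your argument is the standard proof that such a reference would give, and the details check out. In part (1) the adjunction equivalence $f(q)\leq p \iff q\leq g(p)$ is used exactly where it is needed, to upgrade the lower bound $g(\bigwedge S)$ (which only needs monotonicity) to a \emph{greatest} lower bound; in part (2) the canonical candidate $f(q)=\bigwedge\{p\in\mathbb{P} : q\leq g(p)\}$ is the right one, and you correctly isolate meet-preservation as the ingredient that yields $q\leq g(f(q))$, with the $\Leftarrow$ direction being free. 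Your reduction of all the parenthesized statements to the meet/left-adjoint case by passing to $\mathbb{P}^{\mathrm{op}},\mathbb{Q}^{\mathrm{op}}$ is also legitimate, since dualizing both orders converts a right adjoint of $g$ into a left adjoint of $g^{\mathrm{op}}$ and meets into joins. One small point worth recording explicitly: for $f$ to be an adjoint in the poset-as-category dictionary it must itself be order-preserving; this is not an extra hypothesis but a consequence of the equivalence you establish (if $q_1\leq q_2$, then $q_1\leq q_2\leq g(f(q_2))$, hence $f(q_1)\leq f(q_2)$), so it is a one-line remark rather than a gap.
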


Hence, the mappings $Top(X) \hookrightarrow Base(X)$, $Top(X) \hookrightarrow Cov(X)$ and $Top(X) \hookrightarrow \px$ (resp. $Cov(X) \hookrightarrow \px$) have left adjoints (resp. right adjoint) corresponding to the generation of topologies based on a subbase, base, cover and an arbitrary collection of sets. The inclusion $Base(X) \hookrightarrow Cov(X)$ is not covered by Theorem~\ref{thm:johnstone} since $Base(X)$ has neither all meets nor all joins. That said, this inclusion has a left adjoint: for $c \in Cov(X)$ 
$$ c \mapsto \bigwedge_{c \subseteq b} b.$$

In much the same spirit we can apply the above to the collection of weight structures. For instance,  take any $d\in W_{0}$ and send it to $\displaystyle \bigvee_{d\geq d'\in W_{0,\Delta}}d'$; since $d_0 \in W_{0,\Delta}$ (where $\forall x,y \in X$, $d_0(x,y)=0$) this map is well-defined. This mapping is right adjoint to the inclusion map $W_{0,\Delta}\hookrightarrow W_0$. The results from Section 3.1 in conjunction with Theorem~\ref{thm:johnstone} yield

\begin{figure}[H]
\vspace{.1cm}
\centering 
\def\svgwidth{150pt} 
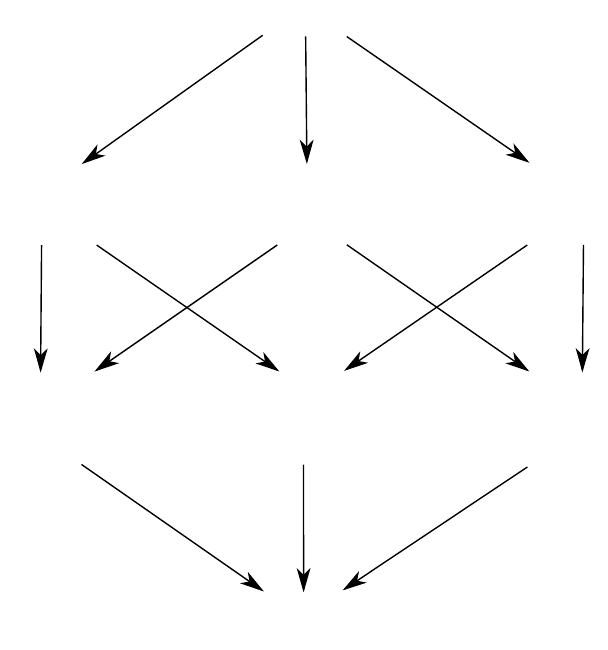 
\vspace{.1cm}
\[
 \text{Right adjoints}
 \]
 \end{figure}

 Notice that the inclusion $W_s \hookrightarrow W_\emptyset$ is not left adjoint and, thus, not included in the above diagram. It's right adjoint, call it $s_*$, would have nowhere to map a weight structure $d\in W_\emptyset$ for which $d(x,y)=0$ and $x\not = y$. Indeed, for any other $d' \in W_s$ we need $d'\leq s_*(d) \Leftrightarrow d'\leq d$ which is impossible. The previous argument can be easily extended to any other inclusion $W_{P\cup s} \hookrightarrow W_P$ with $s\not \in P \subseteq \{\Delta,\Sigma,0\}$. As for left adjoints, the only type of inclusions that are right adjoints are those involving $\Sigma$. More to the point, $W_{P\cup \Sigma} \hookrightarrow W_P$ is right adjoint and only this type of inclusions are right adjoint. Notice that $W_{P \cup \Delta}$ is never closed under meets from $W_P$ and that this fact  settles the cases where $W_{P \cup \Delta} \hookrightarrow W_P$. For $W_s \hookrightarrow W_\emptyset$, notice that for a left adjoint, $s_!$, to exist it must be that if $d\in W_\emptyset$ so that $d(x,y) = 0$ with $x\not = y$, then for any other $d' \in W_s$ we must have $d' \geq s_!(d) \Leftrightarrow d' \geq d$. The meet of all $d'\in W_s$ for which $d' \geq d$ does not exist in $W_s$. Lastly, a left adjoint to $W_{P \cup 0} \hookrightarrow W_P$ does not exist
since the inclusion map does not map the top element in $W_{P\cup0}$ to that of
$W_P$.

 Back to $W_{0,\Delta} \hookrightarrow W_0$, for any $d \in W_0$ that doesn't satisfy the triangle inequality the right adjoints outlined above describe the process for turning $d$ into one that does, call it $m$, and where $d>m$. For a pair $x,y \in X$, $m(x,y)$ must then be the shortest distance, travelling over all finite paths $\gamma:(x=y_0,\ldots, y_n=y)$ on $X$, from $x$ to $y$. For every such path $\gamma$, the distance of each segment $(y_k,y_{k+1})$ will be taken from $d$. In other words, $m(x,y)$ is the infimum of all \textit{lengths} over all finite paths $\gamma$ from $x$ to $y$, where the length of each segment of $\gamma$ is given by $d$. Thus (a) $m<d$, (b) $m$ satisfies the triangle inequality and (c) $m$ is the largest weight structure to satisfy (a) and (b).

\begin{lemma}
For $d\in W_{0}$ and $G:W_{0}\rightarrow W_{0,\Delta}$ given by 

$${\displaystyle d\mapsto\bigvee_{d\geq d'\in W_{0,\Delta}}d'},$$
then for all $x,y\in X$,  $G(d)(x,y) =\bigwedge_{\gamma}\sum_{0}^{n}d(y_{i},y_{i+1})$
where the meet operation ranges over all paths $\gamma:(x=y_{0},y_{1,}\ldots,y_{n}=y)$.

\end{lemma}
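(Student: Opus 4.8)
The plan is to lean on the characterization of $G(d)$ recorded just before the statement: since $W_\Delta$ is closed under arbitrary joins from $W_\emptyset$ and $W_0$ is a sublattice, the join $\bigvee_{d\geq d'\in W_{0,\Delta}}d'$ again lies in $W_{0,\Delta}$, so (equivalently, by the right-adjoint property of $G$) the value $G(d)$ is nothing but the \emph{largest} element of $W_{0,\Delta}$ lying below $d$. Writing $m$ for the proposed right-hand side, $m(x,y)=\bigwedge_\gamma\sum_0^n d(y_i,y_{i+1})$, it therefore suffices to establish three facts: (a) $m\in W_{0,\Delta}$; (b) $m\leq d$; and (c) every $d'\in W_{0,\Delta}$ with $d'\leq d$ satisfies $d'\leq m$. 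Together these say that $m$ is the maximum of $\{d'\in W_{0,\Delta}\mid d'\leq d\}$, hence equals $\bigvee_{d\geq d'\in W_{0,\Delta}}d' = G(d)$; note this set is nonempty since $d_0\leq d$.

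For (a), I would obtain reflexivity $m(x,x)=0$ by evaluating the defining infimum on the trivial path $(x)$, whose length is the empty sum $0$. The triangle inequality is the substantive part: given a path $\gamma_1$ from $x$ to $y$ and a path $\gamma_2$ from $y$ to $z$, their concatenation is a path from $x$ to $z$ whose length is $\mathrm{length}(\gamma_1)+\mathrm{length}(\gamma_2)$, so $m(x,z)$ is bounded above by this sum for every such pair; passing to the infimum over $\gamma_1$ and $\gamma_2$ gives $m(x,z)\leq m(x,y)+m(y,z)$. For (b), the single-segment path $(x,y)$ has length exactly $d(x,y)$, so $m(x,y)\leq d(x,y)$. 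For (c), I would fix $d'\in W_{0,\Delta}$ with $d'\leq d$ and any path $\gamma:(x=y_0,\ldots,y_n=y)$, then iterate the triangle inequality for $d'$ and use $d'\leq d$ to get $d'(x,y)\leq\sum_i d'(y_i,y_{i+1})\leq\sum_i d(y_i,y_{i+1})=\mathrm{length}(\gamma)$; taking the infimum over $\gamma$ yields $d'(x,y)\leq m(x,y)$.

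The one genuinely delicate point will be the $[0,\infty]$-valued arithmetic in step (a): because addition on $[0,\infty]$ has no cancellation and $\infty$ is an allowed value, I would prove $m(x,z)\leq m(x,y)+m(y,z)$ by the usual $\epsilon/2$ choice of near-optimal paths $\gamma_1,\gamma_2$ in the case that both $m(x,y)$ and $m(y,z)$ are finite, handling the case where either summand is $\infty$ separately (the bound being vacuous there). Everything else is a direct unwinding of the definitions, and once (a)--(c) are in place the identity $G(d)=m$ follows immediately from the maximality characterization of the join.
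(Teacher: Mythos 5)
Your proof is correct and takes essentially the same route as the paper's: both exhibit $m(x,y)=\bigwedge_{\gamma}\sum d(y_i,y_{i+1})$ as an element of $W_{0,\Delta}$ lying below $d$ (giving $G(d)\geq m$), and both obtain the reverse inequality by iterating the triangle inequality along an arbitrary path and comparing with $d$ --- your step (c) for an arbitrary $d'\leq d$ is just the direct (contrapositive) form of the paper's contradiction argument, which applies the same computation to $G(d)$ itself. The only differences are cosmetic and in your favor: you package the conclusion as maximality of $m$ in $\{d'\in W_{0,\Delta}\mid d'\leq d\}$, and you actually verify $m\in W_{0,\Delta}$ (trivial path for axiom $0$, the $\epsilon/2$ concatenation argument with the $\infty$ case handled separately), a step the paper asserts without proof.
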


\begin{proof}
Let $m(x,y) = \bigwedge_{\gamma}\sum_{0}^{n}d(y_{i},y_{i+1})$ as defined above and note that $m\in W_{0,\Delta}$. Clearly, $m\leq d$ and so $G(d)\geq m$.
If $G(d)>m$ then $\exists x,y\in X$
so that $G(d)(x,y)>m(x,y)=\bigwedge_{\gamma}\sum_{0}^{n}d(y_{i},y_{i+1})$
and consequently, there exists a $\gamma:(x=y_{0},\ldots,y_{n}=y)$
for which $\sum_{0}^{n}G(d)(y_{i},y_{i+1})\geq G(d)(x,y)>\sum_{i}d(y_{i},y_{i+1})$.
To this end, we have that for some $i\leq n$, $G(d)(y_{i},y_{i+1})>d(y_{i},y_{i+1})$
which is a contradiction.
\end{proof}

In Section~\ref{subsec:furems} we present a complete and comprehensive survey of all adjunctions between collections of weight structures (i.e., adjoints of adjoints, etc). In particular, we express the above outlined adjunction's explicitly just as it was done in the previous lemma. 
In \cite{MR0143169} Kelly defines the notion of a pair of \emph{conjugate} pseudo-quasi-metrics on a set $X$; that is, a pair of weight structures $p,q \in W_{0,\Delta}$ for which $p$ and  $q$ are duals of each other (i.e., $q(x,y) = p(y,x)$ for all $x,y \in X$). The set $X$ endowed with the conjugate pair $(X,p,q)$ defines a bitopology on $X$; a set with two associated topologies, one generated by $p$ and the other by $q$ (i.e., generated by their left $\{0,\Delta\}$-open sets). We can take the join of $p$ and $q$ and obtain a symmetric weight structure on $X$ (which Kelly denotes as $d$). The left adjoint to the inclusion $W_{0,\Delta, \Sigma} \hookrightarrow W_{0,\Delta}$  (for which $m \mapsto \bigwedge \{r \in W_{0,\Delta, \Sigma} \mid r\geq m\}$) is the one that takes $p,q \mapsto d$. Clearly the join of the topologies generated by $p$ and $q$ coincides with the one generated by $d$. Hence, the study of bitopological spaces generated by pseudo-quasi-metrics is closely related to that of the adjunction $W_{0,\Delta}\leftrightarrows W_{0,\Delta,\Sigma}$.
 
\subsection{Adjoints: the full picture}\label{subsec:furems}
For $P \subseteq\{s,0,\Delta,\Sigma\}$ and $M$ a property from $\{s,0,\Delta,\Sigma\} \smallsetminus P$, 
let $M^{*}:W_{P\cup M}\hookrightarrow W_{P}$; the collection $P$ will be understood from context and thus its absence from the notation $M^*$.
Its left adjoint, if it exists, will be denoted by $M_{!}$,
while its right adjoint will be denoted by $M_{*}$.

For the property $0$ and any $P$ not containing $0$,
 ${0^{*}:W_{P \cup 0}\to W_{P}}$
has a right adjoint. The right adjoint is given, for $d\in W_{\emptyset}$,
by $0_{*}(d)(x,y)=d(x,y)$ if $x\ne y$, and $0$ otherwise. Clearly,
for any $d'\in W_{0}$ we have that $d'(x,y)\le0_{*}(d)(x,y)\iff0^{*}(d')(x,y)\le d(x,y)$,
establishing the adjunction. Further, $0_{*}$ itself has a right adjoint if and only if $\Delta\not \in P$. Namely,
$\infty_{!}:W_{0}\to W_{\emptyset}$ given by $\infty_{!}(d')(x,y)=d'(x,y)$
if $x\ne y$ and $\infty$ otherwise. Then, $0_{*}(d)(x,y)\le d'(x,y)\iff d(x,y)\le\infty_{!}(d')(x,y)$
is the proof of the adjunciton. Since $\infty_{!}$ does not map the
bottom element of $W_{0}$ to the bottom element of $W_{\emptyset}$
we see that $\infty_{!}$ does not have a right adjoint, and this
is the end of the line for the adjunctions. 

For the property $\Sigma$, the functor $\Sigma^{*}:W_{\Sigma}\to W_{\emptyset}$
has both a left adjoint and a right adjoint. The left adjoint $\Sigma_{!}$
is given by $\Sigma_{!}(d)(x,y)=d(x,y)\vee d(y,x)$ while the right
adjoint $\Sigma_{*}$ is given by $\Sigma_{*}(d)(x,y)=d(x,y)\wedge d(y,x)$. 

\begin{lemma}
For any $P \subseteq \{0,\Delta,s\}$, $\Sigma_{*}$ (resp. $\Sigma_{!}$) has no right adjoint (resp. no
left adjoint).
\end{lemma}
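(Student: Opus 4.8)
The plan is to apply Theorem~\ref{thm:johnstone}(1) in its contrapositive form. Both $\Sigma_*$ and $\Sigma_!$ are order-preserving maps with domain $W_P$, so a right adjoint to $\Sigma_*$ can exist only if $\Sigma_*$ preserves every join that exists in $W_P$, and a left adjoint to $\Sigma_!$ can exist only if $\Sigma_!$ preserves every meet that exists in $W_P$. It therefore suffices to exhibit, uniformly in $P$, a single binary join in $W_P$ that $\Sigma_*$ fails to preserve together with a single binary meet in $W_P$ that $\Sigma_!$ fails to preserve. On the inputs I will use, $\Sigma_*$ and $\Sigma_!$ act by the pointwise formulas $\Sigma_*(d)(x,y)=d(x,y)\wedge d(y,x)$ and $\Sigma_!(d)(x,y)=d(x,y)\vee d(y,x)$.

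The counterexample I would use is a single ``crossing'' pair. Assuming $|X|\ge 2$ (for $|X|\le 1$ every weight structure is symmetric and the statement is vacuous), fix distinct $x,y\in X$ and define $d_1,d_2\in W_\emptyset$ that vanish on the diagonal and equal $1$ on every off-diagonal ordered pair, except that $d_1(x,y)=1$, $d_1(y,x)=\tfrac12$, whereas $d_2(x,y)=\tfrac12$, $d_2(y,x)=1$. The heart of the argument is the elementary observation that for $a=1,b=\tfrac12,c=\tfrac12,e=1$ one has $\min(\max(a,c),\max(b,e))=1\ne\tfrac12=\max(\min(a,b),\min(c,e))$ and, dually, $\max(\min(a,c),\min(b,e))=\tfrac12\ne1=\min(\max(a,b),\max(c,e))$. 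Concretely, $d_1\vee d_2$ is the all-$1$ structure off the diagonal, so $\Sigma_*(d_1\vee d_2)(x,y)=1$, while $(\Sigma_*(d_1)\vee\Sigma_*(d_2))(x,y)=\tfrac12$; symmetrically $d_1\wedge d_2$ takes the value $\tfrac12$ at both $(x,y)$ and $(y,x)$, whence $\Sigma_!(d_1\wedge d_2)(x,y)=\tfrac12$, whereas $(\Sigma_!(d_1)\wedge\Sigma_!(d_2))(x,y)=1$. These two inequalities are exactly the failures of join- and meet-preservation I need.

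The step I expect to be the main obstacle — and the reason for confining all distances to the band $[\tfrac12,1]$ rather than using the more obvious values $\{0,1\}$ — is verifying that the relevant meets and joins really are computed pointwise inside $W_P$ for \emph{every} $P\subseteq\{0,\Delta,s\}$, and in particular that $\Sigma_*$ and $\Sigma_!$ genuinely reduce to the pointwise $\min$- and $\max$-symmetrizations on these inputs. When $\Delta\in P$ this is delicate: the pointwise minimum of two elements of $W_\Delta$ need not satisfy the triangle inequality, so $\Sigma_*$ is \emph{not} the min-symmetrization in general, and $W_P$ need not be closed under pointwise meets. The band $[\tfrac12,1]$ defuses this, since any off-diagonal weight structure valued in $[\tfrac12,1]$ automatically satisfies the triangle inequality: for distinct points the sum of any two segment-lengths is at least $1$ and hence dominates any single length $\le 1$, while cases in which two of the three points coincide are trivial. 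Keeping the values strictly positive likewise forces separation when $s\in P$.

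Consequently $d_1$, $d_2$, their pointwise meet and join, and the min-/max-symmetrizations appearing above all lie in $W_{0,\Delta,s}\subseteq W_P$. Because each of these pointwise combinations already lands in $W_P$ and is extremal in the ambient lattice $W_\emptyset$, it is the honest meet or join in $W_P$, and $\Sigma_*,\Sigma_!$ act as the stated symmetrizations throughout (the right-hand sides being computed in $W_{P\cup\Sigma}$, which is closed under the relevant pointwise operations here). With this in place the two displayed inequalities hold verbatim, so $\Sigma_*$ preserves neither the join $d_1\vee d_2$ nor does $\Sigma_!$ preserve the meet $d_1\wedge d_2$, and Theorem~\ref{thm:johnstone}(1) yields the claim.
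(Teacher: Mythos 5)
Your proof is correct and follows essentially the same route as the paper's: both arguments apply Theorem~\ref{thm:johnstone}(1) contrapositively and exhibit a pair of weight structures whose off-diagonal values are confined to a band (your $[\tfrac12,1]$, the paper's $(1,2)$, padded by a constant to handle arbitrary $X$) so that all axioms in $\{0,\Delta,s\}$ hold automatically, pointwise meets and joins stay in $W_P$, and the pointwise symmetrizations visibly fail to preserve the relevant binary join (for $\Sigma_*$) and meet (for $\Sigma_!$). If anything, you are more explicit than the paper on one point it glosses over: when $\Delta\in P$ the right adjoint $\Sigma_*$ need not be the pointwise $\min$-symmetrization in general, and your band argument is exactly what justifies using that formula on the chosen inputs.
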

\begin{proof}
This proof is constructed so as to satisfy any $P \subseteq \{0,\Delta,s\}$. We prove it for $|X|=2$
and extend it to any cardinality at the end of this proof.

$(\Sigma_{!})$ Assume as given and recall that if we can show that
there exsits a join in $W_{P}$ that $\Sigma_{!}$ doesn't preserve
then it doesn't have a right adjoint. Let $m,d\in W_{P}$ so that
$2>m(x,y)>d(y,x)>m(y,x)>d(x,y)>1$. Since we also want $m,d\in W_P$ for any $P$, we can make $m$ and $d$ so as to satisfy $\Delta, 0$ and $s$: clearly $\Delta$ and $s$ are satisfied and we let $m(x,x)=m(y,y)=d(x,x)=d(y,y) =0$. That
is, the condition presented above on their distances between $x$
and $y$ does not interfere with any axiom from $\{0,\Delta,s\}$,
only symmetry. Next, we show that
$$
\Sigma_{!}(m\wedge d)(x,y)<\left(\Sigma_{!}(m)\wedge\Sigma_{!}(d)\right)(x,y)
$$

since that proves that $\Sigma_{!}(m\wedge d)<\Sigma_{!}(m)                                                                                                                                                                                                                                                                                                                                                                                                                                                                                                                                                                                                                                                                                                                                                                                                                                                                                                                                                                                                                                                                                                                                                                                                                                                                                                                                                                                                                                                                                                                                                                                                                                                                                                                      \wedge\Sigma_{!}(d)$.
Notice that $(m\wedge d)(x,y)=d(x,y)$
and $(m\wedge d)(y,x)=m(y,x)$ (where $d\wedge m$ is given point-wise) and so $\Sigma_{!}(m\wedge d)(x,y)=\Sigma_{!}(m\wedge d)(y,x)=m(y,x)$.
Similarly, $\Sigma_{!}(m)(x,y)=\Sigma_{!}(m)(y,x)=m(x,y)$ and $ $$ $$\Sigma_{!}(d)(x,y)=\Sigma_{!}(d)(y,x)=d(y,x)$
and so $ $$\left(\Sigma_{!}(m)\wedge\Sigma_{!}(d)\right)(x,y)=d(y,x)>m(y,x)=\Sigma_{!}(m\wedge d)(x,y)$. 

This proof also works for any other $X$. The reason
we restricted the distances above between $1$ and $2$ is so that if
there are other points in $X$, then we can just make their distance
to $x$ and $y$ and between themselves $=1$.

$(\Sigma_{*})$ The above example also works for $\Sigma_{*}$; one must only show that it doesn't preserve that above
metrics' join and can also be extended to any other $X$
just as it was done with the left adjoint.
\end{proof}
Next we explore $\Delta^{*}:W_{P\cup\Delta}\hookrightarrow W_{P}$; it has only a right adjoint, $\Delta_{*}$, and we show
this right adjoint of $\Delta^{*}$ to have no right adjoint (in very
much the same spirit as with the previous property). 

\begin{lemma}
If $\Delta\not\in P$ and $\Delta^{*}:W_{P\cup\Delta}\to W_{P}$, then
$\Delta_{*}$ has no right adjoint.
\end{lemma}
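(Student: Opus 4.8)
The plan is to reduce, via Theorem~\ref{thm:johnstone}, the non-existence of a right adjoint for $\Delta_*$ to the failure of $\Delta_*$ to preserve some join, and then to exhibit such a join explicitly. Recall that $\Delta^*:W_{P\cup\Delta}\hookrightarrow W_P$ has $\Delta_*:W_P\to W_{P\cup\Delta}$ as its right adjoint, and that by the preceding shortest-path lemma this map is computed by $\Delta_*(d)(x,y)=\bigwedge_\gamma\sum_0^n d(y_i,y_{i+1})$, the infimum of $d$-lengths over all finite paths $\gamma$ from $x$ to $y$. By Theorem~\ref{thm:johnstone}(1), if $\Delta_*$ admitted a right adjoint it would preserve every join existing in its domain $W_P$. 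Since $\Delta\notin P$, i.e. $P\subseteq\{0,s,\Sigma\}$, joins in both $W_P$ and $W_{P\cup\Delta}$ are computed pointwise (pointwise maxima preserve each of $0$, $s$, $\Sigma$, and also $\Delta$), so it suffices to produce $m,d\in W_P$ for which $\Delta_*(m\vee d)$ and $\Delta_*(m)\vee\Delta_*(d)$ differ at some pair of points; as $\Delta_*$ is order-preserving we always have $\Delta_*(m)\vee\Delta_*(d)\le\Delta_*(m\vee d)$, so I aim for a pair at which this inequality is strict.

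I would first settle the case $|X|=3$, writing $X=\{a,b,c\}$, and then inflate the remaining distances to cover arbitrary $X$ exactly as in the previous lemma. Define symmetric weight structures $m,d$, each vanishing on the diagonal, by $m(a,b)=1$, $m(b,c)=1$, $m(a,c)=10$ and $d(a,b)=\tfrac32$, $d(b,c)=\tfrac12$, $d(a,c)=10$. All off-diagonal values are positive and symmetric, so $m,d\in W_P$ for every $P\subseteq\{0,s,\Sigma\}$ (they fail only $\Delta$, which is exactly what makes the construction nontrivial). The key point is that $m$ and $d$ realise the same optimal detour length $2$ from $a$ to $c$ through $b$, but distribute that length across the two edges $(a,b)$ and $(b,c)$ differently.

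The computation then runs as follows. Since the only simple paths from $a$ to $c$ are the direct one and the one through $b$, one has $\Delta_*(m)(a,c)=\min(10,\,1+1)=2$ and $\Delta_*(d)(a,c)=\min(10,\,\tfrac32+\tfrac12)=2$, so $(\Delta_*(m)\vee\Delta_*(d))(a,c)=2$. On the other hand $m\vee d$ takes the value $\max(1,\tfrac32)=\tfrac32$ on $(a,b)$ and $\max(1,\tfrac12)=1$ on $(b,c)$, whence the detour through $b$ now has length $\tfrac32+1=\tfrac52$, giving $\Delta_*(m\vee d)(a,c)=\min(10,\tfrac52)=\tfrac52>2$. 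Thus $\Delta_*$ fails to preserve the join $m\vee d$, and by Theorem~\ref{thm:johnstone}(1) it has no right adjoint. To pass to arbitrary $X$ I would set all distances involving a point outside $\{a,b,c\}$ equal to a single large constant (larger than $\tfrac52$) in both $m$ and $d$, which leaves every shortest-path computation above unchanged while keeping $m,d\in W_P$.

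The substantive step is the construction, not the appeal to Theorem~\ref{thm:johnstone}; everything hinges on the elementary but crucial observation that the pointwise maximum does not commute with the shortest-path operator, because $\max(\alpha,\alpha')+\max(\beta,\beta')$ can strictly exceed both $\alpha+\beta$ and $\alpha'+\beta'$. Concretely, I expect the only real care to lie in choosing $m$ and $d$ so that (i) they share a common optimal detour value, (ii) the split of that value across the two edges is genuinely different for the two structures, and (iii) all off-diagonal entries stay strictly positive, so that the example survives even when $s\in P$. Once these three constraints are arranged the strict inequality is forced, and the remainder is routine bookkeeping.
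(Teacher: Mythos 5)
Your proposal is correct and takes essentially the same approach as the paper: both reduce the non-existence of a right adjoint for $\Delta_*$ to its failure to preserve a binary (pointwise) join via Theorem~\ref{thm:johnstone}, both exhibit an explicit three-point counterexample lying in $W_{\{s,0,\Sigma\}}$ so that every admissible $P$ is covered, and both extend to arbitrary $X$ by padding all remaining distances with a suitable constant. The only difference is in the witness itself, and it is cosmetic rather than structural: the paper arranges $m\vee d$ to already satisfy $\Delta$ (so that $\Delta_*(m\vee d)=m\vee d$), whereas you arrange $m$ and $d$ to share the shortest-path value $2$ from $a$ to $c$ while their join has shortest-path value $\tfrac52$ --- and your numerics are explicit and consistent, which is a small improvement, since the paper's stated constraints (all distances in $[1,2]$ yet $m(x,y)+m(x,z)<m(y,z)$) cannot literally be satisfied as written.
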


\begin{proof}
To avoid degenerate cases we demand $|X| = 3$ and show how to extend it to any other cardinality. We create two weight
structures that will satisfy all axioms from $ \left\{ s,0,\Sigma\right\} $
so as to prove the statement for any $P$. Let $X=\left\{ x,y,z\right\} $ and $m,d\in W_{ \left\{ s,0,\Sigma\right\}}$ so
that all distances are bounded below by 1 and above by 2 and
\begin{itemize}

\item $m(x,y)+m(x,z)<m(y,z)$,

\item $d(x,y)+d(x,z)<d(y,z)$, and

\item $m(y,z)<m(x,z)+d(x,y)$, $m(x,y)<d(x,y)$, $m(x,z)>d(x,z)$ and $m(y,z)>d(y,z)$.
\end{itemize}
First notice that $(m\vee d)$ is taken
point-wise since $W_{ \left\{ s,0,\Sigma\right\}}$ is closed under non-empty joins from $W_\emptyset$. It follows that, by design,
$m\vee d\in W_{P\cup\Delta}$ and thus $\Delta_{*}(m\vee d)=m\vee d$.
Next, observe $\Delta_{*}(m)(x,y)=m(x,y)$, $\Delta_{*}(m)(x,z)=m(x,z)$
and $\Delta_{*}(m)(z,y)=m(x,y)+m(x,z)$; $\Delta_{*}(d)(x,y)=d(x,y)$,
$\Delta_{*}(d)(x,z)=d(x,z)$ and $\Delta_{*}(d)(z,y)=d(x,y)+d(x,z)$.
To this end we have 
\[
\Delta_{*}(m)(y,z)\vee\Delta_{*}(d)(y,z)<m(y,z)=\Delta_{*}(m\vee d)(x,y).
\]

Again, to extend this example to any cardinality, we just evaluate
all other distance to and from $x,y,z$ to other points in $X$ and
between all other points in $X$ to be $=1$.
\end{proof}

\section{Properties of $\psi_P: W_P \rightarrow Top(X)$}\label{sec:propsi}

For any $P\subseteq \{\Delta, \Sigma, 0\}$ we let $\psi^+_P: W_P \to Top(X)$ (resp. $\psi^-_P: W_P \to Top(X)$) so that $d$ is mapped to the topology its corresponding cover $\{B_L(x)\epsilon\mid x \in X \mbox{ and } \epsilon >0\}$ (resp. $\{B_R(x)\epsilon\mid x \in X \mbox{ and } \epsilon >0\}$) generates. It is clear that if $\Sigma \in P$ then $\psi^-=\psi^+$ and we assume that $0\in P$ throughout this section.

Several order-theoretic questions about each function $\psi_P^{+},\psi_P^{-}$ arise naturally. For instance, when are $\psi_P^{+},\psi_P^{-}$ order-preserving? Are meets and/or joins preserved? What can be said about the fibers of $\psi_P^{+},\psi_P^{-}$?

Notice that the above questions need only be solved by either one of any pair $\psi_P^{+},\psi_P^{-}$. Indeed, recall from Section~\ref{sec:met(X)} that the function $f:W_P \to W_P$ sending a generalized weight structure to its corresponding dual is an order isomorphism. In particular, we have that $\psi_P^{+} = f \circ \psi_P^{-}$. In view of this, we will only concern ourselves with $\psi_P^{+}$ and, for convenience, we denote it plainly by $\psi_P$.

We begin by noting that if the function $\psi_P$ is closed under any property from $\{$binary meets, binary joins, arb. meets, arb. joins$\}$, then so is each fiber $\psi_P^{-1}(\tau)$ for any $\tau \in Top(X)$. For instance, if $\psi_P$ is closed under binary joins, then take any pair $d,m\in \psi_P^{-1}(\tau)$ for some $\tau \in Top(X)$.  Since $\psi_P$ is closed under binary joins, then $\psi_P(d \vee m) = \psi_P(d)\vee \psi_P(m) = \tau \vee \tau$ which then implies that $d \vee m \in \psi_P^{-1}(\tau)$. Similar arguments prove the above for the remaining properties.

\begin{lemma}\label{lem:Pprop} For $P \subseteq \{\Delta,0,\Sigma\}$ and $M\in\{$binary meets, binary joins, arb. meets, arb. joins$\}$ if $\psi_P$ is closed under $M$, then each fiber of $\psi_P$ is a sub $M$-semillatice of $W_P$.
\end{lemma}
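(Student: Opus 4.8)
The plan is to promote, verbatim and for all four operations at once, the single-case computation that the paragraph preceding the statement already carried out for binary joins. Everything hinges on one feature of the codomain: meets and joins in the complete lattice $Top(X)$ are idempotent, so that for a fixed topology $\tau$ and any nonempty index set $I$ one has $\bigwedge_{i\in I}\tau=\tau=\bigvee_{i\in I}\tau$. Accordingly I would fix an arbitrary $\tau\in Top(X)$, write $\psi_P^{-1}(\tau)$ for its fiber, and --- following the convention for ``sub $M$-semilattice'' recorded in the preliminaries --- reduce the claim to showing that the inclusion $\psi_P^{-1}(\tau)\hookrightarrow W_P$ is closed under the operation $M$ under which $\psi_P$ is assumed closed.

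First I would dispatch the two finitary cases together. Assuming $\psi_P$ closed under binary meets (resp. binary joins), I take $d,m\in\psi_P^{-1}(\tau)$ and compute, using preservation and then idempotency,
\[
\psi_P(d\wedge m)=\psi_P(d)\wedge\psi_P(m)=\tau\wedge\tau=\tau
\qquad(\text{resp. } \psi_P(d\vee m)=\tau\vee\tau=\tau),
\]
so that $d\wedge m$ (resp. $d\vee m$) lies again in $\psi_P^{-1}(\tau)$. The two infinitary cases are word-for-word the same, with the pair $\{d,m\}$ replaced by an arbitrary nonempty family $\{d_i\}_{i\in I}\subseteq\psi_P^{-1}(\tau)$ and the displayed identity replaced by $\psi_P(\bigwedge_i d_i)=\bigwedge_i\psi_P(d_i)=\bigwedge_i\tau=\tau$ (resp. the join version); here one simply invokes that $\psi_P$ preserves the arbitrary meet (resp. join) by hypothesis.

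I do not anticipate a genuine obstacle here; the argument is uniform across the four choices of $M$, and the single delicate point is how the empty meet and empty join are to be treated. Under the convention fixed in the preliminaries --- where being a sub $M$-semilattice of $W_P$ means that the inclusion is closed under $M$ --- only the nonempty combinations handled above require checking, and idempotency supplies them all. Should one instead insist that each fiber carry its own top (empty meet) or bottom (empty join), the matter reduces to the single distinguished fiber over $\psi_P(\top_{W_P})$ or $\psi_P(\bot_{W_P})$, since preservation of all meets (resp. joins) forces $\psi_P$ to carry the top (resp. bottom) of $W_P$ to that of $Top(X)$ --- which is exactly the displayed identity read over the empty family. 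The whole content of the lemma is therefore the remark that any combination, under a single lattice operation of $Top(X)$, of elements drawn from one fiber collapses back to the topology indexing that fiber.
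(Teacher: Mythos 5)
Your proposal is correct and coincides with the paper's own argument, which appears in the paragraph immediately preceding the lemma: take elements of a fiber $\psi_P^{-1}(\tau)$, apply the assumed preservation of $M$ by $\psi_P$, and use idempotency ($\tau\vee\tau=\tau$, etc.) to conclude the combination remains in the fiber, with the paper likewise treating the remaining three operations as ``similar arguments.'' Your extra care about empty meets and joins is a harmless refinement rather than a different route, so there is nothing further to reconcile.
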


Incidentally, for $\Delta \in P$ $\epsilon$-balls form a base and the function $\psi_P$ is order-preserving and preserves finite non-empty joins (thus $\psi_P$ fibers are closed under binary joins). To see this, take $d$ and $m$ and denote $\psi_P(d) \vee \psi_P(m) = \tau$ and $p= m \vee p$. Since $p \geq m,d$ then $\psi_P(p)\geq \tau$. Let $\epsilon > 0$ and take any $y\in B_\epsilon^{d}(x) \cap B_\epsilon^{m}(x)$. Since $p(x,y) = \text{max}\{m(x,y),d(x,y)\}$ then $p(x,y) < \epsilon$. It follows that $y \in B_\epsilon^{p}(x)$, $\psi_P(p) = \tau$ and thus $\psi$ preserves joins.
Next, take $d \in W_P$ so that $d(x,y) = 0$ or $\infty$ for all $x,y \in X$; the topology generated by $d$ partitions $X$ into basic open sets (i.e. open sets where $d(x,y) = 0$ for all $x,y$ in the open set). Notice that if $d'> d$ then $\psi_P(d') > \psi_P(d)$ and, thus, $d = \bigvee(\psi_P^{-1} \circ \psi_P (d)) = \bigvee \{d' \in W_P \mid \psi_P(d) = \psi_P(d')\}$. Consequently, $\psi_P^{-1}(d)$ is closed under non-empty joins. Call any such weight structure (i.e., one whose evaluations are all either $0$ or $\infty$) a \emph{partition weight structure}.

\begin{theorem}\label{thm:met} For $\Delta \in P \subseteq \{\Delta,0,\Sigma\}$
\begin{enumerate}[(a)]
\item
 \begin{enumerate}[(i)]
\item $\psi_P$ preserves binary joins, and fibers along $\psi_P$ are join sublattices of $W_P$.
\medskip
\item With the exception of topologies generated by partition weight structures, $\psi_P$ fibers are not closed under non-empty joins. Consequently, $\psi_P$ does not preserve arbitrary joins.
\end{enumerate}
\medskip
\item In general, $\psi_P$ fibers are not lattices and $\psi_P$ does not preserve binary meets.
\end{enumerate}

\end{theorem}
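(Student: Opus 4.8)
The plan is to dispatch (a) from the apparatus already in place and to reduce (b) to the construction of a single well-chosen pair of weight structures lying in a common fibre. Part (a)(i) needs no new idea: the paragraph preceding the theorem already shows that for $\Delta\in P$ the map $\psi_P$ preserves finite non-empty joins, and feeding this into Lemma~\ref{lem:Pprop} gives at once that every fibre $\psi_P^{-1}(\tau)$ is a join subsemilattice of $W_P$. For (a)(ii) the engine is rescaling. For $t>0$ the map $d\mapsto t\,d$ sends $W_P$ to $W_P$ and merely relabels balls, $B^{t d}_\epsilon(x)=B^{d}_{\epsilon/t}(x)$, so $\psi_P(t\,d)=\psi_P(d)$ and $\{t\,d : t\ge 1\}\subseteq\psi_P^{-1}(\tau)$ for $\tau=\psi_P(d)$. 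Its (pointwise) join is the weight structure $D_\tau$ with $D_\tau(x,y)=0$ if $d(x,y)=0$ and $\infty$ otherwise, a partition weight structure. Since $d(x,y)=0$ holds exactly when $y$ lies in every $\tau$-open neighbourhood of $x$ (otherwise the $d$-ball of radius $d(x,y)$ around $x$ separates them), $D_\tau$ depends only on $\tau$, and $\psi_P(D_\tau)=\tau$ precisely when $\tau$ is generated by a partition weight structure. Hence outside that exceptional case the non-empty join $\bigvee_{t\ge1}t\,d=D_\tau$ leaves the fibre, so fibres fail to be closed under non-empty joins; and as a non-partition $\tau$ occurs whenever $X$ is infinite (e.g. a convergent sequence), Lemma~\ref{lem:Pprop} forbids $\psi_P$ from preserving arbitrary joins.

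For (b) I would first record the reduction that lets one example do both jobs. With $\Delta,0\in P$ the lattice $W_P$ is complete and $a\wedge_P b$ is the largest weight structure below the pointwise minimum $a\wedge b$, realised as the path metric of $a\wedge b$ (the length formula established for the functor $G$). Suppose $a,b$ lie in a common fibre $F=\psi_P^{-1}(\tau)$ yet $\psi_P(a\wedge_P b)\subsetneq\tau$. On the one hand $\psi_P(a\wedge_P b)\neq\tau=\psi_P(a)\wedge\psi_P(b)$, so $\psi_P$ does not preserve binary meets. On the other hand, any $c\in F$ with $c\le a,b$ would satisfy $c\le a\wedge_P b$ and hence $\tau=\psi_P(c)\le\psi_P(a\wedge_P b)\subsetneq\tau$, a contradiction; so $a,b$ have no lower bound at all in $F$, whence $F$ is not a meet-semilattice and therefore not a lattice. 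Both clauses of (b) thus follow from one pair.

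It remains to produce such a pair, and this is the crux. A cheap example is blocked for structural reasons: if $X$ is finite every topology in the image is Alexandrov and $a\wedge_P b$ never coarsens, while if a point is isolated in both $\tau_a$ and $\tau_b$ then $\inf_{y\ne x}\min(a,b)(x,y)>0$ and it stays isolated in the meet; so the extra convergence produced by the meet must attach to a genuine limit point $p$ of $\tau$ yet be invisible to $a$ and $b$ individually. The plan is to work on a countable $X$, secure $\tau_a=\tau_b$ by taking $b=a^{\sigma}$ for a self-homeomorphism $\sigma$ of $(X,\tau_a)$ (since $B^{a^\sigma}_\epsilon(x)=\sigma^{-1}B^{a}_\epsilon(\sigma x)$, one gets $\tau_{a^\sigma}=\tau_a$ immediately), and to choose $a$ and $\sigma$ so that an alternating ladder from $p$ out to a sequence $z_n$ has every edge short for exactly one of $a,b$. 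Then $a\wedge b$ is short along the whole ladder while each of $a,b$ keeps $z_n$ bounded away from $p$, so the path metric $a\wedge_P b$ forces $z_n\to p$ although $z_n\not\to p$ in $\tau$, giving $\psi_P(a\wedge_P b)\subsetneq\tau$. The main obstacle is the simultaneous bookkeeping of the ladder weights so that the total $a\wedge b$-length tends to $0$ across levels while the $b$-cheap, $a$-expensive steps accumulate to keep $z_n$ at bounded $a$-distance from $p$, and so that no spurious convergence is introduced into $\tau_a$ itself; verifying $\sigma$ is a homeomorphism is then routine. A quicker route, adequate for the meet clause alone, is to drop $\tau_a=\tau_b$ and write down directly a countable zigzag of two (pseudo)metrics with $\psi_P(a\wedge_P b)$ strictly coarser than $\psi_P(a)\wedge\psi_P(b)$.
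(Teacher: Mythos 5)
Your part (a) is sound and is essentially the paper's own argument: (a)(i) is delegated, as in the paper, to the paragraph preceding the theorem together with Lemma~\ref{lem:Pprop}, and your (a)(ii) is a mild variant of the published one --- where the paper assumes $d=\bigvee\psi_P^{-1}(\tau)$ lies in the fibre and contradicts maximality via $d<2d$, you compute the join of the chain $\{td \mid t\geq 1\}$ explicitly as the partition weight structure $D_\tau$, which cannot lie in a fibre containing no partition weight structure; both routes then invoke Lemma~\ref{lem:Pprop} contrapositively. Your reduction at the start of (b) is also correct, and it actually records an inference the paper leaves implicit: the published proof produces two weight structures $d,m$ in one fibre whose meet generates a strictly coarser topology and simply asserts that fibres are not lattices, whereas you spell out the missing glue --- any common lower bound $c$ in the fibre would satisfy $c\leq d\wedge_P m$ (the meet in $W_P$, i.e.\ the path metric of the pointwise minimum, per the lemma on the adjoint $G$), whence $\tau=\psi_P(c)\subseteq\psi_P(d\wedge_P m)\subsetneq\tau$, using order-preservation of $\psi_P$, which is valid because $\Delta\in P$.

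The genuine gap is that the pair $(a,b)$ on which all of part (b) rests is never constructed. You give the right blueprint (an alternating ladder from a limit point out to a sequence $z_n$, each rung cheap in exactly one of the two structures, total ladder length tending to $0$) but then concede that ``the main obstacle is the simultaneous bookkeeping of the ladder weights,'' deferring exactly the verifications --- the triangle inequality for each of $a$ and $b$, the absence of spurious convergence in $\tau_a$, and $\tau_a=\tau_b$ --- on which the conclusion stands; as written, (b) is proved only conditionally on an example you do not supply. The paper discharges precisely this bookkeeping geometrically (Figure~\ref{fig:deltameet}): $X$ consists of a point $x$, a sequence $\{y_n\}$ on concentric arcs of radii shrinking to $0$, and interpolating points $1_n,2_n,\ldots$ on each level; the two weight structures $d$ and $m$ are restrictions of the Euclidean plane metric to two copies of this set (one carrying the red interpolants, one the blue), so the triangle inequality and the absence of unwanted convergence come for free, and the evident bijection is checked to be a homeomorphism, putting $d$ and $m$ in one fibre --- this plays the role of your self-homeomorphism $\sigma$. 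Each ladder edge is then short in exactly one of the two metrics, and the shrinking radii give $p(x,y_n)\to 0$ for the meet $p$ while $y_n\not\to x$ in either metric, so $\psi_P(d\wedge_P m)\subsetneq\tau$. Your approach is therefore realizable, and realizable by exactly the device the paper uses, but the crux example must actually be exhibited; note also that your proposed ``quicker route'' with $\tau_a\neq\tau_b$ would recover only the meet-preservation clause of (b), not the failure of fibres to be lattices, so it cannot substitute for the full construction.
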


\begin{proof} $(a)(i)$ has been proved already. For $(a)(ii)$ it is clear that fibers that contain a partition weight structure are closed under non-empty joins. Otherwise, if for some $\tau$ whose fiber does not contain a partition weight structure we have $d = \bigvee \psi_P^{-1}(\tau) \in \psi_P^{-1}(\tau)$, then $d<2d \in \psi_P^{-1}(\tau)$ (a contradiction). By Lemma \ref{lem:Pprop}, $\psi$ does not preserve arbitrary joins.
For $(b)$ we make use of Figure~\ref{fig:deltameet}, where all points are taken to depict points from the plane and the dashed lines represent concentric arcs centered at $x$ (the radii of these arcs converges to $0$). Let $X_1$ (resp. $X_2$) denote the collection $\{x\}\cup\{y_i\mid 1\leq i\}$ in addition to all red sequences (resp. blue sequences). We also let $d:W_P(X_1)$ and $m:W_P(X_1)$ be the usual planar metrics on their respective sets of points. Notice that for any fixed $j\geq 1$ neither $X_1$ nor $X_2$ contains the limit of any $\{j_n\}_{n\in\N}$ sequence and the same is true of the $\{y_n\}_{n\in\N}$ sequence for both sets. The obvious function $F:X_1\to X_2$ where $F(y_i)=y_i$, $F(x)=x$ and all red points are mapped to their corresponding blue points generates a homeomorphism between $(X_1,d)$ and $(X_2,m)$. The claim is true on all points other than $x$ itself (since both metrics are discrete on all points other than $x$). For $x$ and any $\delta >0$ if $\delta$ is the same as the radius of any one of the dashed arcs, then $B_\delta^{d}(x) = B_\delta^{m}(f(x))$. Let $c_1 < \delta < c_2$ where $c_1$ and $c_2$ represent the radii of a pair of adjacent arcs. Obviously, $B_\delta^{d}(x) \subseteq B_\delta^{m}(x)$ and since $\{y\}$ ($y\not=x$) is open in $d$ then $B_\delta^{m}(x) = B_\delta^{d}(x) \cup \{y\mid f(y)\in B_\delta^{m}(x)\}$. Hence, both metrics generate homeomorphic topologies, in which case we will consider $X_1$ and $X_2$ as being the same set and refer to it as $X$.

\begin{figure}[H]
\includegraphics[scale = .5]{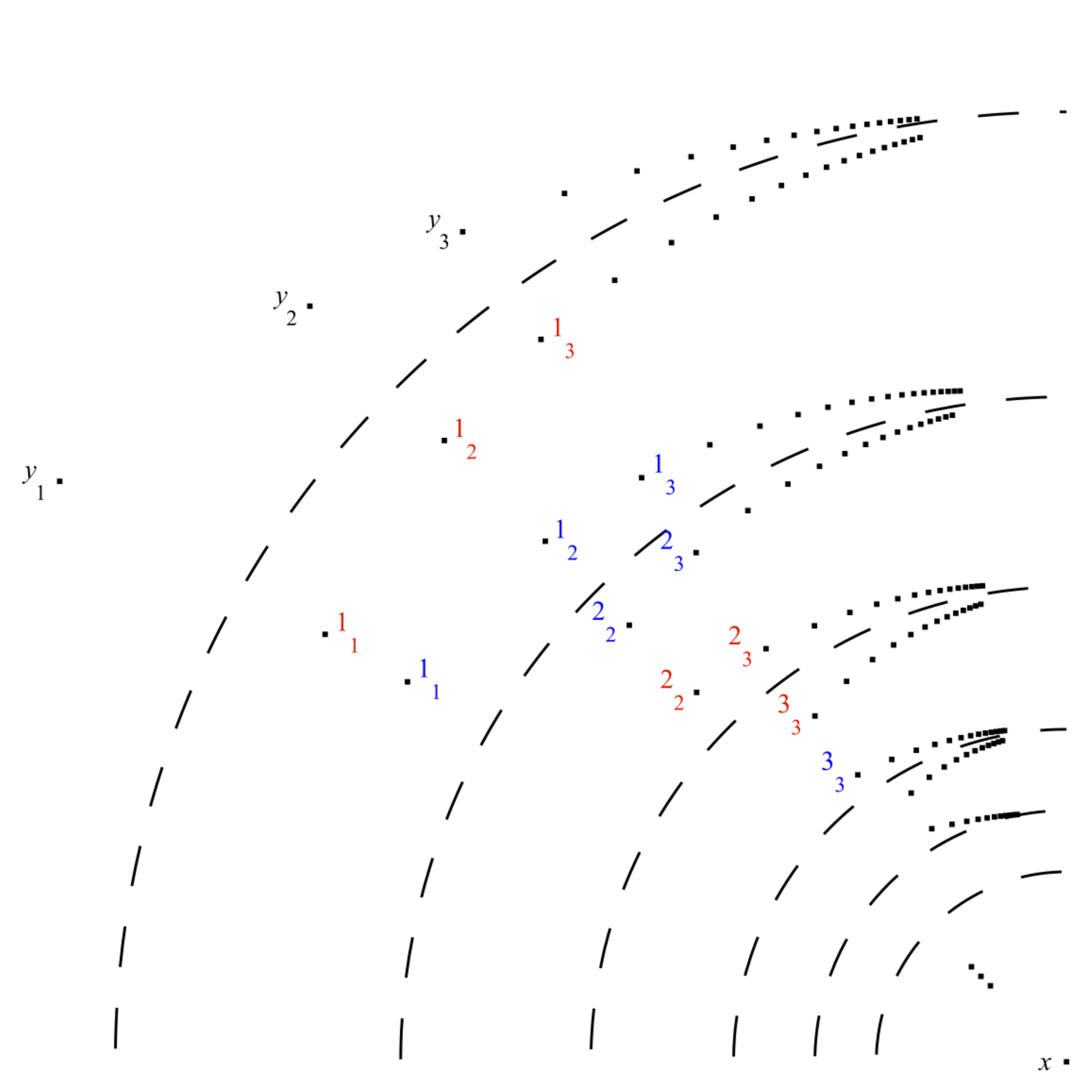}
\caption{The metrics $d$ and $m$}
\label{fig:deltameet}

\end{figure}


 Let $p=d \wedge m$: we will show that $\{y_n\}_{n\in\N}$ converges to $x$ in $(X,p)$. For $p$, the shortest way to travel from $x$ to $y_1$ is to go from $x$ to $1_1$ via $m$ and from $1_1$ to $y_1$ via $d$. The same happens between $x$ and $y_2$ (i.e. $p(x,y_2) = d(x,2_2)+m(2_2,1_2)+d(1_2,y_2))$. The sequences are taken to represent sets of points that become arbitrarily close to each other. For instance, for the blue $\{1_i\}$ sequence and the blue $\{2_i\}$ sequence, we demand that $\displaystyle\lim_{j\to \infty} m(1_j,2_j) = 0$. Thus $\displaystyle \lim_{n\to \infty} p(x,y_n) = 0$ and the sequence $\{y_n\} \xrightarrow{p} x$. This tells us that there is a convergent sequence in $(X,p)$ that converges in neither $(X,d)$ nor $(X,m)$ and thus the topology generated by $p$ is strictly coarser than the one generated by $d$ and $m$.
\end{proof}







The scenarios where $\Delta \not \in P$ are not as clear since $\epsilon$-balls do not form a base of open sets; $\psi_P$ is actually a function from $W_P$ into $Cov(X)$. In particular, as the following example shows, $\psi_P$ in general is not order-preserving.

\begin{example}\label{exa:notpres} Let $X = \{x_n\} \cup \{y_n\}\cup \{x\}$ and $d, m \in W_{0,\Sigma}$ so that
for all $a,b \in X$
\[
d(a,b)=\begin{cases}
0 & a,b\in \{x_n\} \cup \{y_n\}\\
2^{-n} & a=x,b=x_n \mbox{ or } a=x,b=y_0\\
\frac{d(x,x_n) + d(x,x_{n+1})}{2} & a=x,b=y_n,n>0.
\end{cases}
\]
and
\[
m(a,b)=\begin{cases}
0 & a,b\in \{x_n\} \cup \{y_n\}\\
2^{-n} & a=x,b=x_n \mbox{ or } a=x,b=y_0\\
\frac{d(x,x_{n-1}) + d(x,x_{n})}{2} & a=x,b=y_n,n>0.
\end{cases}
\]\\

Clearly, $m > d$ and neither of the topologies generated by them contains the other.
\end{example}

Example~\ref{exa:notpres} also shows that neither $\psi_0$ nor $\psi_{0,\Sigma}$ are closed under either meets or joins. Indeed, $\psi_P(m \vee d)= \psi_P(m)<\psi_P(d)\vee \psi_P(m)$ and $\psi_P(m \wedge d)= \psi_P(d)>\psi_P(d)\wedge \psi_P(m)$. Showing that fibers are not sublattices is more involved.

\begin{theorem}\label{thm:3part} For $\Delta \not \in P$
\begin{enumerate}[(a)]
\item $\psi_P$ is not order-preserving.
\medskip
\item $\psi_P$ preserves neither meets nor joins.
\medskip
\item Fibers along $\psi_P$ are, in general, not closed under meets in $W_P$.
\end{enumerate}

\end{theorem}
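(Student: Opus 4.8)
The plan is to dispose of (a) and (b) by a direct appeal to Example~\ref{exa:notpres}, and to prove (c) by exhibiting an explicit pair of weight structures lying in a common fibre whose meet escapes it. For (a) and (b), Example~\ref{exa:notpres} already supplies $d<m$ in $W_{0,\Sigma}$ with $\psi_P(d)$ and $\psi_P(m)$ incomparable in $Top(X)$, which is exactly the failure of order-preservation asserted in (a). Since $m\vee d=m$ and $m\wedge d=d$ pointwise, the relations $\psi_P(m\vee d)=\psi_P(m)<\psi_P(m)\vee\psi_P(d)$ and $\psi_P(m\wedge d)=\psi_P(d)>\psi_P(m)\wedge\psi_P(d)$, with the right-hand joins and meets taken in $Top(X)$, record that $\psi_P$ preserves neither joins nor meets, giving (b). Both computations are verbatim the same for $P=\{0\}$ and $P=\{0,\Sigma\}$ because the structures are symmetric.

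The substance is (c). The tool I would isolate first is a convergence criterion for the subbasic topology: when $\Delta\notin P$ the $\epsilon$-balls form only a subbasis, and a sequence $(c_k)$ satisfies $c_k\to p$ in $\psi_P(w)$ if and only if $\limsup_k w(c,c_k)\le w(c,p)$ for every $c\in X$ (being eventually inside each subbasic ball about $p$ reduces to this, and finite intersections impose nothing further). This converts the whole question into elementary inequalities. I would then build $d,m\in W_{0,\Sigma}$ on $X=\{x\}\cup\{p,q\}\cup\{y_n : n\ge 1\}$ as follows: the $y_n$ form a blob at mutual distance $0$, with $d(x,y_n)=m(x,y_n)=1/n$; the two auxiliary points play dual roles, $p$ being far from the blob and $q$ close in $d$ (say $d(p,y_n)=1$, $d(q,y_n)=1/n$) and these roles reversed in $m$; the finitely many distances inside $\{x,p,q\}$ are fixed at small positive constants (e.g. $d(p,x)=d(p,q)=m(q,x)=m(q,p)=\tfrac1{10}$ and $d(x,q)=m(x,p)=\tfrac12$) arranged so that, in each metric, the auxiliary point that is metrically close to the blob is still topologically isolated by the other one.

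The step I expect to be the main obstacle is the equality $\psi_P(d)=\psi_P(m)$. One must resist arguing this through the isometry swapping $p$ and $q$, since that isometry would identify $m$ with $d$ and render the meet trivial; instead I would verify directly from the criterion that both metrics possess exactly the same open sets. Concretely, in each of $d$ and $m$ the points $x,p,q$ are isolated (the blocking auxiliary point prevents any convergence to them), and the blob carries the identical neighbourhood structure, the minimal neighbourhood of $y_j$ being the tail $\{y_n : n\ge j\}$, governed by the common values $w(x,y_n)=1/n$. Hence both generate the single topology $\tau$ in which $y_n$ converges to none of $x,p,q$. The asymmetry between $d$ and $m$ is topologically invisible precisely because which isolated point happens to be metrically near the blob is not detected by $\tau$.

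Finally I would compute the meet: $(d\wedge m)(p,y_n)=(d\wedge m)(q,y_n)=\min(1,1/n)=1/n$, so in $d\wedge m$ both $p$ and $q$ are close to the blob and neither can block. The criterion then yields $y_n\to x$ in $\psi_P(d\wedge m)$, so $x$ is no longer isolated and $\psi_P(d\wedge m)\ne\tau$. Therefore $d$ and $m$ lie in the fibre over $\tau$ while $d\wedge m$ does not, establishing (c). The conceptual mechanism making this work is that the two metrics block the convergence $y_n\to x$ through different points, and the pointwise minimum dismantles both blocks simultaneously.
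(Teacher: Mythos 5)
Your proposal is correct and takes essentially the same route as the paper: parts (a) and (b) are handled exactly as in the text via Example~\ref{exa:notpres}, and your construction for (c) --- two symmetric weight structures in a common fiber that block the convergence of a sequence through interchanged auxiliary points, so that the pointwise meet (valid since $\Delta\notin P$) dismantles both blocks at once --- is a cosmetic variant (an added target point $x$ and a zero-distance blob) of the paper's example on $X=\{x_n\}\cup\{y,z\}$, where $y$ and $z$ swap the near/far roles. The $\limsup$ convergence criterion you isolate for the subbasis-generated topology is sound and simply makes explicit the finite-intersection ball computations the paper leaves implicit, including your correct caution that the $p\leftrightarrow q$ swap only gives a homeomorphism, so equality of $\psi_P(d)$ and $\psi_P(m)$ must be checked directly.
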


\begin{proof} Example~\ref{exa:notpres} proved part $(a)$, and $(a) \Rightarrow (b)$. We prove $(c)$ for the case where $P = \{0,\Sigma\}$ and note that the scenario where $P=\{0\}$ follows immediately. Take $X = \{x_n\} \cup \{y,z\}$ (all distinct points) and define $d,m \in W_{0,\Sigma}$, with $0<\alpha<1$, as follows

\[
d(a,b)=\begin{cases}
\alpha & a = y,b=z \\
2^{-n} & a=y,b=x_n\\
1 & a=z,b=x_n\\
|d(y,x_i)-d(y,x_j)| & a=x_i, b=x_j.
\end{cases}
\]

and

\[
m(a,b)=\begin{cases}
d(a,b) & a = y,b=z\\
d(y,x_n) & a=z,b=x_n\\
d(z,x_n) & a=y,b=x_n\\
|d(y,x_i)-d(y,x_j)| & a=x_i, b=x_j.
\end{cases}
\]

\medskip

Notice that both weight structures generate the same topology on $X$. In particular, the set $\{y,z\}$ can be separated from the sequence $\{x_n\}$. Next, let $p = d\wedge m$ and notice that in the topology generated by $p$ we have $\{x_n\}\rightarrow \{y,z\}$. Indeed, in $p$ any $\epsilon$-ball about either $z$ or $y$ contains an infinite tail of $\{x_n\}$ and, thus, the cover generated by $p$ cannot separate $\{y,z\}$ from the sequence.
\end{proof}

Weight structures that do not satisfy the triangle inequality are usually referred to as \textit{premetrics} and \textit{semimetrics} (depending on whether or not they satisfy symmetry). A standard way of generating a topology based on a premetric (in contrast to generating a cover that then generates a topology) is as follows

\begin{definition} The function $\phi_P:W_P \to Top(X)$ is the one for which given $d\in W_P$ then $O\in \phi_P(d)$ iff for all $x \in O$ we can find $\epsilon > 0$ so that $B_\epsilon(X) \subseteq O$.
\end{definition}

The following result can be easily verified.

\begin{lemma} For any $P\subseteq \{0,\Sigma,\Delta\}$ the function $\phi_P$ is order-preserving. Moreover, given any $d\in W_P$, $\psi_P(d) \geq \phi_P(d)$.
\end{lemma}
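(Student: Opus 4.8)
The plan is to read off both claims directly from the definition of $\phi_P$ in terms of $\epsilon$-balls, the sole input being the elementary fact that enlarging a weight structure shrinks its balls. I would write $B_\epsilon^d(x) = \{y \in X \mid d(x,y) < \epsilon\}$ for the left ball and fix conventions: $Top(X)$ is ordered by inclusion (finer $=$ larger) while $W_P$ carries the pointwise order. The monotonicity fact I would isolate first is that $d \leq m$ forces $B_\epsilon^m(x) \subseteq B_\epsilon^d(x)$ for every $x$ and every $\epsilon > 0$, since $m(x,y) < \epsilon$ gives $d(x,y) \leq m(x,y) < \epsilon$. I would also record in passing (needed only to guarantee that $\phi_P(d)$ lands in $Top(X)$ at all) that axiom $0 \in P$ yields $x \in B_\epsilon^d(x)$ for all $\epsilon$, after which closure of $\phi_P(d)$ under arbitrary unions and finite intersections — the latter via $\epsilon = \min(\epsilon_1,\epsilon_2)$ — is routine.

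For order-preservation I would take $d \leq m$ and $O \in \phi_P(d)$: for each $x \in O$ a witness $\epsilon$ with $B_\epsilon^d(x) \subseteq O$ also satisfies $B_\epsilon^m(x) \subseteq B_\epsilon^d(x) \subseteq O$ by the monotonicity fact, so the same radius witnesses $O \in \phi_P(m)$. Hence $\phi_P(d) \subseteq \phi_P(m)$, i.e. $\phi_P(d) \leq \phi_P(m)$, as required.

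For the comparison $\psi_P(d) \geq \phi_P(d)$, I would use that $\psi_P(d)$ is the topology generated by the subbase $\{B_\epsilon^d(x) \mid x \in X,\ \epsilon > 0\}$, so in particular every $\epsilon$-ball is $\psi_P(d)$-open. Given $O \in \phi_P(d)$, choosing for each $x \in O$ a radius $\epsilon_x$ with $B_{\epsilon_x}^d(x) \subseteq O$ exhibits $O = \bigcup_{x \in O} B_{\epsilon_x}^d(x)$ as a union of $\psi_P(d)$-open sets, whence $\phi_P(d) \subseteq \psi_P(d)$, which is exactly $\psi_P(d) \geq \phi_P(d)$.

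I expect no genuine obstacle here; the only points demanding care are keeping the two orderings aligned — a pointwise-larger weight structure yields \emph{smaller} balls but a \emph{finer} (hence larger) $\phi$-topology — and recognizing why the inequality $\psi_P \geq \phi_P$ is typically strict rather than an equality: a subbasic ball $B_\epsilon^d(x)$ need not itself be $\phi_P(d)$-open once $\Delta \notin P$, since without the triangle inequality there is no way to fit a ball around an interior point of the ball, so equality fails precisely in the cases that motivate distinguishing $\phi_P$ from $\psi_P$.
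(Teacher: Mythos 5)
Your proof is correct; the paper itself gives no argument for this lemma (it only remarks that the result ``can be easily verified''), and your verification is exactly the routine one intended --- monotonicity of balls ($d\leq m$ shrinks balls, hence $\phi_P(d)\subseteq\phi_P(m)$) for the first claim, and writing a $\phi_P$-open set as the union $\bigcup_{x\in O}B^d_{\epsilon_x}(x)$ of subbasic $\psi_P(d)$-open balls for the second. One small misattribution worth fixing: axiom $0$ is not what makes $\phi_P(d)$ a topology (the collection of sets all of whose points admit a ball inside the set is a topology for \emph{any} weight structure), but it \emph{is} genuinely needed at the step $O=\bigcup_{x\in O}B^d_{\epsilon_x}(x)$, since $x\in B^d_{\epsilon_x}(x)$ requires $d(x,x)=0$ --- consistent with the section's standing assumption that $0\in P$.
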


 Notice that if $\Delta \in P$, $\psi_P = \phi_P$. Otherwise, even with this stronger definition of an open set, $\epsilon$-balls are not guaranteed to be open sets.

\begin{theorem} For $0\in P$
\begin{enumerate}[(a)]
\item
 \begin{enumerate}[(i)]
\item $\phi_P$ preserves binary meets, and fibers along $\phi_P$ are sub meet semilattices of $W_P$.
\medskip
\item Fibers along $\phi_P$ are not closed under binary joins, and $\phi_P$ does not preserve joins.
\end{enumerate}
\medskip
\item In general, $\phi_P$ fibers are not closed under arbitrary meets, and $\phi_P$ does not preserve arbitrary meets.
\end{enumerate}
\end{theorem}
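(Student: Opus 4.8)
The operative hypothesis here is $\Delta\notin P$: when $\Delta\in P$ the map $\phi_P$ coincides with $\psi_P$ and the behaviour of meets and joins is governed by Theorem~\ref{thm:met}, so the present statements are the new content obtained when $\Delta\notin P$, which I assume from now on. Then $W_P$ is closed under the pointwise meet $\meet$ and join $\join$ inherited from $W_\emptyset$, the map $\phi_P$ is order-preserving by the preceding lemma, and I write $B^d_\epsilon(x)=\{y\mid d(x,y)<\epsilon\}$.

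For $(a)(i)$ the plan is to prove $\phi_P(d\meet m)=\phi_P(d)\meet\phi_P(m)=\phi_P(d)\cap\phi_P(m)$ directly. The inclusion $\phi_P(d\meet m)\subseteq\phi_P(d)\cap\phi_P(m)$ is immediate from order-preservation, since $d\meet m\le d$ and $d\meet m\le m$. For the reverse inclusion I take $O$ open in both $\phi_P(d)$ and $\phi_P(m)$ and fix $x\in O$; choosing $\epsilon_1,\epsilon_2$ with $B^d_{\epsilon_1}(x)\subseteq O$ and $B^m_{\epsilon_2}(x)\subseteq O$ and setting $\epsilon=\epsilon_1\meet\epsilon_2$, any $y$ with $(d\meet m)(x,y)<\epsilon$ satisfies $d(x,y)<\epsilon_1$ or $m(x,y)<\epsilon_2$, hence $y\in O$; thus $B^{d\meet m}_\epsilon(x)\subseteq O$ and $O\in\phi_P(d\meet m)$. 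The fiber statement follows by the same reasoning recorded before Lemma~\ref{lem:Pprop}: if $d,m$ lie in one fiber then $\phi_P(d\meet m)=\phi_P(d)\meet\phi_P(m)$ equals their common value, so $d\meet m$ lies in that fiber.

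For $(b)$ I exhibit a decreasing family whose infimum leaves its fiber. Take $X=\{x\}\cup\{c_n\}_{n\ge1}$ and, for each $i\ge1$, let $d_i\in W_{0,\Sigma}$ be symmetric with $0$ diagonal, $d_i(c_n,c_m)=1$ for $n\ne m$, $d_i(x,c_n)=1$ for $n\ne i$, and $d_i(x,c_i)=1/i$. Each $d_i$ keeps every point at distance $\ge 1/i$ from $x$, so $\{x\}=B^{d_i}_{1/(2i)}(x)$ is open, every $c_n$ is isolated, and $\phi_P(d_i)$ is the discrete topology; thus all $d_i$ lie in one fiber. However $(\bigmeet_i d_i)(x,c_n)=1/n\to 0$, so under $\phi_P(\bigmeet_i d_i)$ the sequence $\{c_n\}$ converges to $x$, $\{x\}$ is no longer open, and $\phi_P(\bigmeet_i d_i)$ is strictly coarser than the discrete topology. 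Hence the fiber is not closed under arbitrary meets and $\phi_P(\bigmeet_i d_i)\ne\bigmeet_i\phi_P(d_i)$. Any finite sub-meet pulls only finitely many $c_n$ toward $x$ and leaves $\{x\}$ open, consistent with $(a)(i)$.

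For $(a)(ii)$ I aim at a single example producing both assertions: weight structures $d,m$ with $\phi_P(d)=\phi_P(m)=\tau$ whose pointwise join $d\join m$ induces a strictly finer topology. Such a pair at once shows the fiber of $\tau$ is not closed under binary joins and, since $\phi_P(d)\join\phi_P(m)=\tau$, that $\phi_P$ does not preserve joins. The mechanism making this possible — and the step I expect to be the main obstacle — is that, because $\Delta\notin P$, $\epsilon$-balls need not be $\phi_P$-open, so a sequence may converge to a point $p$ in $\tau$ while remaining metrically bounded away from $p$ in one of the two structures, its convergence forced only by the enlargement of neighbourhoods that non-open balls impose. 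I would engineer the convergence so that it is realized metrically in $d$ but only through such forcing in $m$ (and symmetrically), so that $\phi_P(d)$ and $\phi_P(m)$ coincide as topologies while the join distance $\max(d,m)$ separates the offending sequence from $p$; the separating set is then $(d\join m)$-open yet lies outside $\tau=\phi_P(d)\join\phi_P(m)$. The delicate part is arranging the two structures so that their topologies are \emph{literally equal}: every closeness witnessed metrically by one structure must be reproduced topologically by the other via a cluster of auxiliary points that the join subsequently pushes apart, and verifying that no residual asymmetry distinguishes $\phi_P(d)$ from $\phi_P(m)$ is where the bulk of the work lies.
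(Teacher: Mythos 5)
Your (a)(i) is correct and is essentially the paper's own argument: with $\Delta\notin P$ the meet in $W_P$ is pointwise, order-preservation gives one inclusion, and the identity $B^{d\wedge m}_\epsilon(x)=B^d_\epsilon(x)\cup B^m_\epsilon(x)$ for $\epsilon=\min\{\epsilon_1,\epsilon_2\}$ gives the other; the fiber statement then follows as in Lemma~\ref{lem:Pprop}. Your (b) example is also correct (each $d_i$ is discrete, the pointwise infimum pulls the tail of $\{c_n\}$ into every ball about $x$, and $W_P$ is closed under arbitrary pointwise meets for $P\subseteq\{0,\Sigma\}$), though it is more elaborate than needed: the paper takes $X=\{x,y\}$ and $d_n(x,y)=\frac{1}{n}$, so that every $d_n$ generates the discrete topology while the meet generates the indiscrete one.

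The genuine gap is (a)(ii). There you state the right target (a pair $d,m$ in one fiber whose pointwise join has a strictly finer topology, which kills both claims at once) and correctly identify the enabling mechanism (balls need not be $\phi_P$-open when $\Delta\notin P$), but you never construct $d$ and $m$; you explicitly defer the verification that $\phi_P(d)=\phi_P(m)$ \emph{literally} as ``where the bulk of the work lies.'' That is the only nontrivial content of the claim, so as written (a)(ii) is unproved. Moreover, the difficulty you anticipate (clusters of auxiliary points, residual asymmetry) dissolves once you notice that $s\notin P$ permits distinct points at distance $0$, which is how the paper does it (recycling the construction of Theorem~\ref{thm:3part} with $\alpha=0$): take $X=\{x_n\}\cup\{y,z\}$, set $d(y,z)=m(y,z)=0$, $d(y,x_n)=2^{-n}$, $d(z,x_n)=1$, $d(x_i,x_j)=|2^{-i}-2^{-j}|$, and let $m$ swap the roles of $y$ and $z$. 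Every ball about $z$ in $d$ contains $y$ (distance $0$), so every $\phi_P(d)$-open set meeting $\{y,z\}$ contains both points and a tail of $\{x_n\}$, and symmetrically for $m$; each $x_n$ is isolated in both, so $\phi_P(d)=\phi_P(m)=\tau$ exactly, with no asymmetry left to check. The join satisfies $(d\vee m)(y,z)=0$ while $(d\vee m)(y,x_n)=(d\vee m)(z,x_n)=1$, so $\{y,z\}$ is $\phi_P(d\vee m)$-open but not in $\tau=\phi_P(d)\vee\phi_P(m)$. Without this (or some equally concrete) construction, your proposal establishes only (a)(i) and (b).
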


\begin{proof} Theorem~\ref{thm:met} deals with the case where $\Delta \in P$ and we assume that $\Delta \not \in P$ throughout the proof. In particular, this means that given a pair of weight structures from $ W_P$ then their meet is taken pointwise. \\
For $(a)(i)$ first notice that since $\phi_P$ is order-preserving it follows that for any pair $d,m \in W_P$ we have $\phi_P(d \wedge m) \leq \phi_P(d) \wedge \phi_P(m)$. Next, let $O \in  \phi_P(d) \wedge \phi_P(m)$ and denote $p = d \wedge m$. In turn, for any $x\in O$ we can find  $\epsilon_1,\epsilon_2 > 0$ so that $B^{d}_{\epsilon_1}(x),B^{m}_{\epsilon_2}(x) \subseteq O$. Letting $\epsilon = \text{min}\{\epsilon_1,\epsilon_2\}$ it is simple to verify that $B^{p}_{\epsilon} = B^{d}_{\epsilon} \cup B^{m}_{\epsilon} \subseteq B^{d}_{\epsilon_1}(x) \cup B^{m}_{\epsilon_2}(x) \subseteq O$. Hence, $\phi_P(p) = \phi_P(d) \wedge \phi_P(m)$.\\
For $(a)(ii)$ we make use of part of the proof of Theorem~\ref{thm:3part} where $X=\{x_n\}\cup\{z,y\}$ as follows

\[
d(a,b)=\begin{cases}
0 & a = y,b=z\\
2^{-n} & a=y,b=x_n\\
1 & a=z,b=x_n\\
|d(y,x_i)-d(y,x_j)| & a=x_i, b=x_j.
\end{cases}
\]
and
\[
m(a,b)=\begin{cases}
d(a,b) & a = y,b=z\\
d(y,x_n) & a=z,b=x_n\\
d(z,x_n) & a=y,b=x_n\\
|d(y,x_i)-d(y,x_j)| & a=x_i, b=x_j.
\end{cases}
\]\\

Clearly, $\phi_P(d \vee m)$ contains $\{y,z\}$ as an open set and both weight structures generate the same topology on $X$. Since any open set containing $y$ in $\phi_P(d)$ contains an infinite tail of the sequence $\{x_n\}$ then $\phi_P(d)$ cannot separate $y$ from the sequence  (the same is true for $z$ w.r.t. $m$). Consequently, the topology generated by $d\vee m$ is strictly larger than the one generated by $\phi_P(d) \vee \phi_P(m)$. It is important to notice that (just as with Theorem~\ref{thm:3part}) the non-symmetric case follows immediately. \\
That last claim is simpler: take a two-point set $X= \{x,y\}$ and $d_n \in W_P$ so that $d_n(x,y) = \frac{1}{n}$. Each weight structure generates the discrete topology on $X$ but their meet generates the trivial topology.
\end{proof}

\section{$Met(X)$}\label{sec:metx}
\subsection{Lattice Properties}

Recall that for a set $X$, $Met(X) = W_{0,\Delta,\Sigma} = W_{0} \cap W_\Delta \cap W_\Sigma$ and since all of the latter objects are sub join complete semilattices of the ambient lattice, so is $Met(X)$. In the previous section we constructed an instance where two elements from $W_\Delta$ have a meet outside $W_\Delta$. Furthermore, the construction allows for the weight structures to be in $Met(X)$ as long as $|X| >2$. Hence, $Met(X)$ is not a sublattice of $W_\emptyset$. Of course, since $Met(X)$ is a join complete lattice, it is a complete lattice in its own right. The meet operation is different to that of the ambient lattice; for a collection $\{d_i\}_{i\in I} \subset Met(X)$ the meet $d$ (within $Met(X)$) is given by 

$$
d(x,y) = \displaystyle \bigwedge_{\gamma:(x=y_0,\ldots,y_n = y)} \sum_j \left( \bigwedge_i d_i(y_j,y_{j+1})\right),
$$
\smallskip
\noindent
for any pair $ x,y \in X$. 

It is not hard to see that $Met(X)$ does not contain atoms (resp. anti-atoms); simply choose any element $d \in Met(X)$ and let $d' \in Met(X)$ be the one that for all $x,y \in X$ yields $d'(x,y) = \frac{d(x,y)}{2}$ (resp. $d'(x,y) = 2d(x,y)$). However, there exists a very special collection of metric structures that closely resembles anti-atoms in $Met(X)$.

\begin{definition} For $x\in X$ and $\alpha\in(0,\infty]$ let
 $d^{\alpha,x,y} \in Met(X)$ with $d_{\alpha,x,y}(x,y)=\alpha$ and, $d_{\alpha,x,y}(z,w)=\infty$ otherwise. We will be refer to these objects as \emph{pseudo-anti-atoms}.
\end{definition}

Pseudo-anti-atoms are, is a sense, the collection of the \emph{largest} elements from $Met(X)$. That is, for any metric structure $d \in Met(X)$ we can find a pseudo-anti-atom, call it $d^{\alpha,x,y}$, so that $d<d^{\alpha,x,y}<d_\infty$. In particular, we can describe any metric structure as a meet of pseudo-anti-atoms; take $d\in Met(X)$ and notice that

$$
\displaystyle d= \bigwedge_{x,y\in X} d^{d(x,y),x,y}.
$$

In the literature on $Top(X)$ one also looks
at basic intervals \cite{MR0305322} (i.e., chains where each link is obtained
by a minimal change).

\begin{definition}
For $d,m\in Met(X)$ write $d\prec_n m$
if $d<m$ and if there exists a finite collection $\{x_j \mid 1 \leq j \leq n\}$ of distinct elements from $X$ such that $d(u,v)=m(u,v)$
for all $u,v\in X$ so that $\{u,v\}\not \subseteq \{x_j\}$.
For $d,m\in Met_1(X)$ we say that the interval $[d,m]$
is an \emph{$n$-elementary interval }if $\forall d_{3},d_{4}\in[d,m]$
holds that if $d_{3}<d_{4}$ then $d_{3}\prec_n d_{4}$. Say that $d$
is \emph{$n$-elementarily maximal} if no $d'$ exists with $d\prec_n d'$ and \emph{$n$-elementarily minimal} if no $d'$ exists so that $d' \prec_n d$.
\end{definition}

If $d \prec_n m$ then $d \prec_m m$ for all $m\geq n$. Consequently, an $n$-elementary interval (resp. a metric structure is $n$-elementary maximal, $n$-elementary minimal) is $m$-elementary (resp. $m$-elementary maximal, $m$-elementary minimal) for all $m\geq n$ (resp. $m\leq n$). In Section~\ref{sec:embed} we shall prove that any finite lattice embeds in an $n$-elementary interval for some $n \in \N$.

\begin{lemma} For any $n\in \N$ if $d \prec_n m$ with $0< d(x,y)$ $\forall x,y\in X$ and $d(x,y) < m(x,y)$, then

\begin{enumerate}[(a)]
\item The metric structures $d$ and $m$ generate the same topology on $X$.
\item All metric structures in $[d,m]$ generate the same topology on $X$.
\end{enumerate}
\end{lemma}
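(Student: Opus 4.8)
The plan is to compare the two metric topologies point by point through their neighbourhood filters. Set $F=\{x_1,\dots,x_n\}$, the finite set of points on which $d$ and $m$ are allowed to disagree. Since $\Delta\in P$, the $\epsilon$-balls form a base for both $\psi_P(d)$ and $\psi_P(m)$ (indeed $\psi_P=\phi_P$ here), so it suffices to show that at every point $z\in X$ the collections $\{B_\epsilon^d(z)\mid \epsilon>0\}$ and $\{B_\epsilon^m(z)\mid\epsilon>0\}$ generate the same filter.

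First I would dispose of the points $z\notin F$. For such a $z$ and any $w\in X$ the pair $\{z,w\}$ is not contained in $F$, so $d(z,w)=m(z,w)$ by the definition of $\prec_n$; hence $B_\epsilon^d(z)=B_\epsilon^m(z)$ for every $\epsilon$, and the two neighbourhood filters at $z$ are literally identical.

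The only real work is at a point $z=x_i\in F$, and here the hypothesis that $d$ is strictly positive off the diagonal does the heavy lifting. Since $m\ge d$ pointwise we always have $B_\epsilon^m(x_i)\subseteq B_\epsilon^d(x_i)$, so one inclusion of filters is automatic. For the reverse, put $\delta=\min\{d(x_i,x_j)\mid x_j\in F,\ j\neq i\}$; as $F$ is finite and $d$ is positive off the diagonal, $\delta>0$ (when $F=\{x_i\}$ there is nothing to prove, since then $d=m$). Given $\epsilon>0$, I would take $\epsilon'=\min(\epsilon,\delta)$ and check that $B_{\epsilon'}^d(x_i)\subseteq B_\epsilon^m(x_i)$: any $w$ with $d(x_i,w)<\epsilon'\le\delta$ cannot be one of the other points of $F$ (those lie at $d$-distance $\ge\delta$ from $x_i$), so $w=x_i$ or $w\notin F$; in either case $\{x_i,w\}\not\subseteq F$, whence $m(x_i,w)=d(x_i,w)<\epsilon'\le\epsilon$ and $w\in B_\epsilon^m(x_i)$. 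Thus the filters agree at $x_i$ as well, and part (a) follows. I expect this isolation-by-a-uniform-gap step to be the crux; everything else is routine squeezing.

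For part (b) I would simply re-run the argument of (a) with $m$ replaced by an arbitrary $p\in[d,m]$. If $\{u,v\}\not\subseteq F$ then $d(u,v)=m(u,v)$, and $d\le p\le m$ squeezes $p(u,v)=d(u,v)$; so $d$ and $p$ again agree off $F$, while $d\le p$ and $d$ remains positive off the diagonal. The proof of (a) used only that $p\ge d$, that $d$ is positive off the diagonal, and that the two structures agree off the finite set $F$ — never the strictness of the inequality — so it applies verbatim to the pair $(d,p)$ and yields $\psi_P(p)=\psi_P(d)$. Combined with $\psi_P(d)=\psi_P(m)$ from part (a), every metric structure in $[d,m]$ generates the common topology $\psi_P(d)$.
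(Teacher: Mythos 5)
Your proof is correct and follows essentially the same route as the paper's: the balls agree verbatim at every point off the finite exceptional set $F$, the inclusion $B_\epsilon^m(x_i)\subseteq B_\epsilon^d(x_i)$ is automatic from $m\ge d$, the reverse inclusion comes from a uniform positive gap $\delta$ below the pairwise distances within $F$ (using positivity of $d$ off the diagonal), and part (b) is a one-line reduction to (a) — the paper via ``(a) iff (b)'' (i.e., the sandwich $\tau_d\subseteq\tau_p\subseteq\tau_m$ from order-monotonicity), you by rerunning the argument for $p\in[d,m]$ after squeezing $p=d$ off $F$. Two small remarks: your choice $\delta=\min\{d(x_i,x_j)\}$ capped by $\epsilon'=\min(\epsilon,\delta)$ is in fact tighter than the paper's $0<\delta<\min\{m(x_i,x_j)\mid i\ne j\}$, which read literally neither forces $\delta\le\epsilon$ nor excludes other points of $F$ from $B_\delta^d(x_i)$ (since $d<m$ on pairs inside $F$, one can have $d(x_i,x_j)<\delta<m(x_i,x_j)$); and your parenthetical ``in either case $\{x_i,w\}\not\subseteq F$'' is false when $w=x_i$, though the conclusion survives trivially because axiom $0$ gives $m(x_i,x_i)=d(x_i,x_i)=0<\epsilon$.
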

\begin{proof} Clearly $(a)$ iff $(b)$ and we prove $(a)$. We begin by letting $\{x_j\}_{j\leq n}$ be the set for which $d(x_i,x_j)<m(x_i,x_j)$ ($i\ne j$). All open balls about any $z\in X\smallsetminus \{x_j\}_{j\leq n}$ are the same for both metric structures. Also, any $m$ $\epsilon-$ball about any $x_i$ is finer that a $d$ $\epsilon-$ball about $x_i$. Lastly, for any $\epsilon > 0$ let $0<\delta <\text{min}\{m(x_i,x_j)\mid i\ne j\}$ and notice that $B_\delta^{d}(x) \subseteq B_\epsilon^{m}(x)$. Since both metric structures generate the same topology the proof is complete.
\end{proof}

Recall that a metric space $(X,d)$ is \emph{Menger convex }if for
all $x\ne y\in X$ and $0<r<d(x,y)=L$ there exists a point $p\in X$
satisfying $d(x,p)=r$ and $d(p,y)=L-r$. Similarly, we define the dual of Menger convexity: a metric
space $d$ is \emph{Menger$^{*}$ convex} provided for any pair $x\ne y\in X$ there
exists a $z\in X$ so that $d(y,z)=d(x,y)+d(x,z)$. We adopt both definitions for the collection $Met(X)$. 

\begin{theorem}
A metric structure $d$ is $1$-elementary maximal (resp. $1$-elementary minimal) if, and only if, $\forall x,y\in X$ and
all $\epsilon>0$ there exists a $z\in X$ so that $d(x,y)>d(x,z)+d(y,z)-\epsilon$ (resp. $d(x,z)>d(x,y)+d(y,z)-\epsilon$).
\end{theorem}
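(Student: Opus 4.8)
The plan is to reduce the global notion of $1$-elementary maximality (resp. minimality) to a purely local question about a single distance, and then to read off the stated $\epsilon$-conditions directly from the triangle inequality. Unwinding the definition, $d$ fails to be $1$-elementary maximal exactly when some $m \in Met(X)$ is obtained from $d$ by strictly increasing the distance of a single pair $\{x,y\}$ (equivalently the two entries $m(x,y)=m(y,x)>d(x,y)$) while leaving every other value of $d$ unchanged; dually, $d$ fails to be $1$-elementary minimal exactly when such an $m$ can be produced by strictly \emph{decreasing} one distance. So the whole theorem comes down to deciding, for a fixed pair $x\ne y$, which values $v$ may be assigned to $m(x,y)=m(y,x)$, with $m=d$ off that pair, so that $m$ still lies in $Met(X)$.

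First I would fix $x\ne y$ and carry out the bookkeeping of the triangle inequalities for the modified structure $m$. The only inequalities that can be affected are those involving the pair $\{x,y\}$. When raising the entry to a value $v>d(x,y)$, an inequality in which the modified entry sits on the right-hand side, i.e. is used as a summand, is only \emph{relaxed}; the inequalities not involving $\{x,y\}$ are untouched; and the only ones that can fail are those in which the modified entry is the left-hand (``long'') side, namely $v\le d(x,z)+d(z,y)$ for the third points $z\in X\smallsetminus\{x,y\}$. Hence $m\in Met(X)$ iff $v\le \inf_{z}\left(d(x,z)+d(z,y)\right)$, the infimum taken over $z\ne x,y$. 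Consequently a strict increase is possible iff $d(x,y)<\inf_{z}\left(d(x,z)+d(z,y)\right)$, so $d$ is $1$-elementary maximal iff for every pair $x,y$ one has $\inf_{z}\left(d(x,z)+d(z,y)\right)=d(x,y)$; the inequality $\ge$ is automatic from $\Delta$, and the equality is precisely the assertion that for all $\epsilon>0$ there is $z$ with $d(x,z)+d(z,y)<d(x,y)+\epsilon$, which by $\Sigma$ is the stated condition.

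The minimal case is handled by the dual bookkeeping. Lowering the entry to $v<d(x,y)$ now \emph{relaxes} the inequality $d(x,y)\le d(x,z)+d(z,y)$ and leaves the pair-free inequalities alone, so the only ones that can fail are the two ``reverse'' inequalities $d(x,z)\le v+d(z,y)$ and $d(z,y)\le v+d(x,z)$ for $z\ne x,y$; together these say $v\ge \sup_{z}\lvert d(x,z)-d(z,y)\rvert$. Since $\Sigma$ together with separation also forces $v>0$, a strict decrease is possible iff $d(x,y)>\sup_{z}\lvert d(x,z)-d(z,y)\rvert$, and therefore $d$ is $1$-elementary minimal iff $d(x,y)=\sup_{z}\lvert d(x,z)-d(z,y)\rvert$ for all $x,y$ (the bound $\ge$ being the reverse triangle inequality). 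Spelling the supremum out as ``for all $\epsilon>0$ there is $z$ with $\lvert d(x,z)-d(z,y)\rvert>d(x,y)-\epsilon$'' and letting $x,y$ range over \emph{ordered} pairs turns the absolute value into the one-sided inequality $d(x,z)>d(x,y)+d(y,z)-\epsilon$ of the statement.

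I expect the main obstacle to be the triangle-inequality bookkeeping in the middle step: one must verify carefully that, aside from the handful of inequalities singled out above, raising (resp. lowering) the one entry genuinely leaves all remaining triangle inequalities valid, including those with $\infty$ as a value and the instances where the intermediate point coincides with $x$ or $y$ (which are vacuous and force the convention that $z$ ranges over third points). A secondary point to get right is the interaction with separation in the minimal case, ensuring $v$ can be chosen strictly positive whenever $\sup_{z}\lvert d(x,z)-d(z,y)\rvert<d(x,y)$, and noting the degenerate behaviour when $\lvert X\rvert\le 2$, where no third point $z$ exists.
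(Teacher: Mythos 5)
Your reduction is the same one the paper uses: the paper's proof is a two-sentence sketch asserting that the only axiom a one-pair modification can disturb is the triangle inequality, and your proposal supplies exactly the bookkeeping that sketch leaves implicit. The maximality half is correct and complete: raising $m(x,y)=m(y,x)$ to $v$ endangers only the inequalities $v\le d(x,z)+d(z,y)$, so maximality amounts to $d(x,y)=\inf_{z\ne x,y}\left(d(x,z)+d(z,y)\right)$, which is the stated $\epsilon$-condition; and you are right that $z$ must range over third points (if $z\in\{x,y\}$ is allowed, $z=y$ makes the stated condition trivially true, yet non-maximal structures abound, e.g.\ $|X|=2$ with $d(x,y)<\infty$). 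One harmless slip: $Met(X)=W_{0,\Delta,\Sigma}$ does not include the separation axiom $s$, so $v>0$ is not actually forced in the minimal case; this does not matter, since a strict decrease is possible iff $\sup_{z\ne x,y}|d(x,z)-d(z,y)|<d(x,y)$ regardless.

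The one genuine misstep is the final translation in your minimal case: ``letting $x,y$ range over ordered pairs turns the absolute value into the one-sided inequality'' is not an equivalence. Your derived criterion, $d(x,y)=\sup_{z\ne x,y}|d(x,z)-d(z,y)|$ for every pair, is the correct dual; the ordered one-sided condition in the statement is strictly stronger. Take $X=\{0\}\cup[1,2]\subset\R$ with the induced metric: it is $1$-elementarily minimal (for a pair $\{0,t\}$, points $z\to t$ inside $[1,2]$ give $d(0,z)-d(z,t)\to t=d(0,t)$; for $\{s,t\}\subseteq[1,2]$, the point $z=0$ gives $|d(s,0)-d(0,t)|=d(s,t)$), yet for the ordered pair $(x,y)=(1,0)$ every third point $z\in(1,2]$ has $d(1,z)-d(z,0)=-1$, so no $z$ satisfies $d(1,z)>d(1,0)+d(0,z)-\epsilon$ once $\epsilon\le 2$. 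Thus under the literal ordered reading the necessity direction of the minimal half fails, and the correct formulation is the disjunctive (absolute-value) one you actually proved. In fairness, the paper's own proof dismisses minimality with ``its dual follows immediately'' and inherits exactly this blemish, just as both you and the paper silently read $\prec_1$ as ``modify one pair'' even though the definition of $\prec_n$ with $n=1$ literally permits changes only where $\{u,v\}\subseteq\{x_1\}$, i.e.\ at a single point, and is vacuous because axiom $0$ pins $d(x_1,x_1)=0$; your unwinding matches the intended meaning, but both of these repairs deserve to be stated rather than assumed.
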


\begin{proof} We prove the claim for elementary maximality since its dual follows immediately. For sufficiency of any such $d$, it is clear that if for any pair $x\not = y \in X$ and an arbitrary $\epsilon >0$ we can find $z\in X$ so that $d(x,y)>d(x,z)+d(y,z)-\epsilon$, then it is impossible to find any $m$ so that $ d \prec_1 m$. Indeed, any such $m$ would violate the triangle inequality by only extending the distance of a distinct pair $x,y$ and nothing else. Necessity is also straight forward; the only axiom that inhibits a 1-elementary extension on any such $d$ is the triangle inequality. 
\end{proof}

The following is simple to verify.

\begin{corollary}
Any Menger (resp. Menger$^{*}$) metric structure is $1$-elementarily maximal (resp. $1$-elementarily minimal).
\end{corollary}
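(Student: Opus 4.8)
The plan is to show that the definitions of Menger convexity and Menger$^*$ convexity are special cases of the $\epsilon$-criterion from the preceding theorem, whence the result follows by direct appeal to that theorem. I would prove only the Menger case, since the Menger$^*$ case is entirely dual (it is obtained by exchanging the roles of the triangle inequality's endpoints, exactly as the preceding theorem's two halves are related).

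First I would unpack the hypothesis. Suppose $(X,d)$ is Menger convex and fix distinct $x,y \in X$ together with an arbitrary $\epsilon > 0$. The goal, by the preceding theorem, is to produce $z \in X$ with $d(x,y) > d(x,z) + d(y,z) - \epsilon$. Set $L = d(x,y)$ and choose any radius $r$ with $0 < r < L$. Menger convexity then yields a point $z \in X$ satisfying $d(x,z) = r$ and $d(z,y) = L - r$. For this witness one computes
\[
d(x,z) + d(y,z) = r + (L - r) = L = d(x,y),
\]
so in fact $d(x,y) = d(x,z) + d(y,z) > d(x,z) + d(y,z) - \epsilon$ for every $\epsilon > 0$. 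Thus the $\epsilon$-criterion holds a fortiori (indeed with equality rather than strict approximation), and the preceding theorem immediately gives that $d$ is $1$-elementarily maximal.

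The main subtlety to flag, rather than any genuine obstacle, is that Menger convexity as stated requires a point with $d(x,p) = r$ exactly, which presupposes there exist intermediate values strictly between $0$ and $L$; this is automatic for $x \ne y$ with $L = d(x,y) < \infty$, and when $L = \infty$ the convexity condition is vacuous or interpreted separately, so one should note that the argument needs $x \ne y$ (already assumed) and that the case of infinite distances contributes no constraint. Since the witness produced by convexity satisfies the criterion with equality, the Menger condition is strictly stronger than $1$-elementary maximality, which is exactly why the corollary is a one-directional implication and not an equivalence. I would close by remarking that the Menger$^*$ case follows by applying the dual half of the preceding theorem to a witness $z$ with $d(y,z) = d(x,y) + d(x,z)$, supplied directly by the definition of Menger$^*$ convexity.
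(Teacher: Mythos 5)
Your argument is correct and is precisely the verification the paper leaves to the reader (its entire "proof" is the line "The following is simple to verify"): the Menger witness realizes the triangle inequality with equality, hence satisfies the $\epsilon$-criterion of the preceding theorem, and the Menger$^*$ case is dual. One caveat: your side claim that radii $0<r<L$ exist "automatically" for distinct $x,y$ with $L<\infty$ fails when $d(x,y)=0$ (possible here, since $Met(X)=W_{0,\Delta,\Sigma}$ does not include the separation axiom $s$), and when $L=\infty$ the strict inequality $\infty>\infty-\epsilon$ also fails in $[0,\infty]$; both degenerate cases need a trivial separate word (e.g.\ take the witness $z=y$ when $L=0$), an omission the paper's own statement and theorem share.
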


\subsection{Lattice Embeddability}\label{sec:embed}

In \cite{MR0016750} Whitman proved that any lattice can be embedded in the lattice of equivalence relations, $Eq(X)$, for some set $X$. We exploit this remarkable result in the following where $\psi: Met(X) \to Top(X)$ and $\top$ is the discrete topology on $X$.

\begin{theorem}\label{thm:anylattice}
Any lattice can be embedded in $\psi^{-1}(\top)$ for some set $X$.
\end{theorem}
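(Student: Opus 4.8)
The plan is to reduce the statement to Whitman's theorem via the map $\psi$. By Whitman's result any lattice $\mathbb{L}$ embeds into $Eq(X)$, the lattice of equivalence relations on some set $X$, so it suffices to realize $Eq(X)$ (or a sublattice into which $\mathbb{L}$ already embeds) inside the fiber $\psi^{-1}(\top)$. First I would fix the set $X$ supplied by Whitman's theorem and exhibit a concrete order-embedding of $Eq(X)$ into $Met(X)$ whose image lands entirely in $\psi^{-1}(\top)$; that is, each metric structure in the image must generate the \emph{discrete} topology. The natural candidate is to send an equivalence relation $E$ to the metric $d_E$ defined by $d_E(x,y) = 0$ if $x = y$, some fixed positive value (say $1$) if $x \mathrel{E} y$ and $x \ne y$, and $\infty$ if $x \mathrel{\not E} y$. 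Since $d_E(x,y) \geq 1$ for all $x \ne y$, every singleton is open, so $\psi(d_E) = \top$ and the image sits in the correct fiber.

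The second step is to verify that $E \mapsto d_E$ is a lattice embedding, i.e. injective and preserving both meets and joins computed \emph{inside} $Met(X)$ (recall from Section~\ref{sec:metx} that the meet in $Met(X)$ is the path-infimum, not the pointwise meet). Injectivity and order-preservation are immediate from the definition, since $E \subseteq E'$ iff $d_E \geq d_{E'}$. For joins: the join of equivalence relations is the transitive closure of their union, and I expect the path-length join formula in $Met(X)$ to mirror exactly this transitive-closure operation, because a finite path with each segment of finite $d_E$- or $d_{E'}$-length corresponds precisely to a chain of related pairs in $E \cup E'$. For meets: the intersection $E \cap E'$ should correspond to the metric that is finite on a pair only when \emph{both} metrics are finite there, which is what the pointwise meet (and hence the path-infimum, since all finite values are bounded below by $1$) delivers. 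The key technical point is that because all nonzero finite distances equal $1$ and are thus bounded away from $0$, the infimum over paths is always attained and the path formula collapses to a purely combinatorial statement about reachability, making it coincide with the lattice operations on $Eq(X)$.

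The main obstacle I anticipate is the join computation: I must confirm that the $Met(X)$-join of $d_E$ and $d_{E'}$ is exactly $d_{E \vee E'}$ rather than something strictly smaller. The danger is that summing several length-$1$ segments along a path could produce a distance larger than $1$ on a pair that is nonetheless related in $E \vee E'$, so the image metric would assign that pair a value $> 1$ instead of the value $1$ prescribed by $d_{E \vee E'}$. If that happens the map fails to preserve joins on the nose. To handle this I would either allow $d_E$ to take \emph{all} values in a way that makes the transitive-closure paths have the right length, or, more cleanly, observe that I only need a lattice embedding and can therefore replace the rigid value $1$ by letting distances within an $E$-class be governed by a rescaled or truncated metric so that concatenation still yields a finite (hence discrete-topology-preserving) value and the meet/join identities hold. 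The cleanest route is likely to use the two-valued-plus-infinity construction and check directly that $d_E \vee d_{E'}$ (path-join) equals $d_{E\vee E'}$ by verifying that a pair is a finite distance apart in the join iff it is joined by an $E$-or-$E'$ chain; the precise bookkeeping of the attained path length is where the care is needed.
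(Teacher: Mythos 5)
Your construction has a genuine gap, and it is exactly the one you flagged: join preservation fails, and none of your proposed repairs closes it. Since your map $E \mapsto d_E$ is order-reversing ($E \subseteq E'$ iff $d_E \ge d_{E'}$ --- note you call this order-preservation; at best you embed $Eq(X)^{op}$, so you would also need the easy remark that $L^{op}$ is a lattice and apply Whitman to it), an embedding must carry joins of equivalence relations to meets in $Met(X)$ \emph{exactly}, and the meet in $Met(X)$ is the path infimum. Take $X=\{x,y,z\}$, $E$ with classes $\{x,y\},\{z\}$ and $E'$ with classes $\{x\},\{y,z\}$. Then $E \vee E'$ is the full relation, so $d_{E\vee E'}(x,z)=1$, whereas the $Met(X)$-meet of $d_E$ and $d_{E'}$ evaluates at $(x,z)$ to $d_E(x,y)+d_{E'}(y,z)=2$. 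Hence $d_E \wedge d_{E'} \neq d_{E\vee E'}$: your map is an order embedding but not a lattice embedding. Your ``cleanest route'' --- verifying that a pair is at finite distance iff it is joined by an $E$-or-$E'$ chain --- cannot repair this, because an embedding requires equality of weight structures, not agreement about which distances are finite; the path infimum produces the chain-length metric, whose values $2,3,\ldots$ are never attained by any $d_{E''}$. A smaller slip: in your meet paragraph the roles of min and max are swapped; intersection corresponds to the pointwise \emph{supremum} (finite iff finite in both), and that direction does work exactly.

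For comparison, the paper's own proof uses the dual uniform encoding $d_\sim(a,b)=\alpha$ if $a\sim b$ and $1$ otherwise, with $1<\alpha<2$. All values then lie in $[1,2]$, so the triangle inequality is automatic and both meets and joins of such structures are computed pointwise; intersections of relations map to meets exactly, but the asserted join preservation fails by the same three-point example: the pointwise join of $d_E$ and $d_{E'}$ takes value $1$ at $(x,z)$ while $d_{E\vee E'}(x,z)=\alpha$, i.e.\ the pointwise join encodes the union of the relations, not its transitive closure. So the obstacle you identified is the crux of the theorem, not an artifact of your choice of values, and it cannot be evaded by rescaling or truncating a two-valued encoding: whether $(x,z)\in E\vee E'$ is not a function of whether $(x,z)\in E$ and whether $(x,z)\in E'$ (compare $E'$ above with $E''=E$), so no assignment whose value at a pair depends only on membership of that pair can preserve pointwise joins; and any encoding that uses $\infty$ to force the path infimum into play produces chain-length values instead of the prescribed ones. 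Closing the gap needs a genuinely different idea for the transitive-closure step (or a weaker reading of ``embedded,'' e.g.\ an order-isomorphism onto a subset of the fiber), not more careful bookkeeping.
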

\begin{proof} Take $1<\alpha<2$ and let $\phi:Eq(X)\rightarrow Met(X)$ so that ${\sim}\in Eq(X)$
 $\phi(\sim) = d_\sim \in Met(X)$ where

\[
d_\sim(a,b)=\begin{cases}
\alpha & \text{ if }  a\sim b \text{ and}\\
1 & \mbox{otherwise.}
\end{cases}
\]\\

Then for $\{\sim_{i}\}_{i\in I} \subset Eq(X)$ and ${\sim}=\bigwedge\sim_{i}$
we have $a\sim b$ if and only if $(a,b)\in{\sim_{i}}$ for all $i.$
The same happens with $\phi(\sim)=\phi(\bigwedge{\sim_{i}})$. That is, $\phi(\sim)(a,b)=\alpha$
if and only if $\phi(\sim_{i})(a,b)=\alpha$ for all $i.$ Hence,
$\phi(\sim)=\bigwedge\phi(\sim_{i})$. The same occurs with joins;
for ${\sim} =\bigvee\sim_{i}$ then $\phi(\sim)=\bigvee\phi(\sim_{i})$. Notice that the above joins and meets in $\psi^{-1}(\top)$ are in agreement with the meets and joins from the ambient lattice.
Thus we have embedded $Eq(X)$ within $\psi^{-1}(\top)$.
\end{proof}

The authors of \cite{MR1617095} show that all finite distributive lattices occur as intervals between Hausdorff topologies; we show something similar occurs in $Met(X)$. Until 1980, one of the outstanding questions in lattice theory was a conjecture by Whitman: \emph{every finite lattice can be embedded in some $Eq(X)$, for a finite set $X$.} The conjecture was turned into a theorem by P. Pudl\'{a}k and J. T\r{u}ma in \cite{finitelattice}.

\begin{theorem} Any finite lattice can be embedded in an $n$-elementary interval within $Met(X)$ for some finite set $X$.
\end{theorem}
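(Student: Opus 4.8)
The plan is to combine the two embedding results already invoked in this subsection: the Pudl\'{a}k--T\r{u}ma theorem, which embeds any finite lattice $L$ into $Eq(Y)$ for some \emph{finite} set $Y$, and the map $\phi:Eq(Y)\to Met(Y)$ constructed in the proof of Theorem~\ref{thm:anylattice}. First I would fix a finite lattice $L$ and apply Pudl\'{a}k--T\r{u}ma to obtain a finite $Y$ together with a lattice embedding $L\hookrightarrow Eq(Y)$. Composing with $\phi$ (for a fixed $\alpha\in(1,2)$) gives a map $L\to Met(Y)$ which, by the argument in Theorem~\ref{thm:anylattice}, preserves all meets and joins and is injective; the key new observation is that because $Y$ is finite, the image of $\phi$ is a \emph{finite} collection of metric structures, all taking values in $\{1,\alpha\}$. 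This is exactly the setting in which $n$-elementary intervals become relevant.

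The core of the argument is to verify that the image $\phi(Eq(Y))$ (and hence the image of $L$) sits inside a single $n$-elementary interval. I would set $\bot' = \bigwedge \phi(Eq(Y))$ and $\top' = \bigvee \phi(Eq(Y))$, both computed inside $Met(Y)$; since $\phi$ preserves meets and joins these are the images of the bottom and top of $Eq(Y)$, namely $\bot'=d_{\sim_{\min}}$ and $\top'=d_{\sim_{\max}}$, where $\sim_{\min}$ is the identity relation and $\sim_{\max}$ is the total relation. Concretely $\bot'(a,b)=1$ for $a\ne b$ while $\top'(a,b)=\alpha$ for $a\ne b$ (with both vanishing on the diagonal). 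Thus $\bot'$ and $\top'$ differ on \emph{every} off-diagonal pair, and every element of the interval $[\bot',\top']$ is a $\{1,\alpha\}$-valued metric structure. I would then take $n=\binom{|Y|}{2}$ and check that $[\bot',\top']$ is an $n$-elementary interval: if $d_3<d_4$ in this interval, the set of pairs $\{u,v\}$ on which they differ is a subset of the off-diagonal pairs of $Y$, and since $|Y|$ is finite the number of points involved is at most $|Y|$, so one can exhibit a finite set of at most $n$ distinct points witnessing $d_3\prec_n d_4$ in the sense of the definition.

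The step I expect to require the most care is the verification of the defining condition $d_3\prec_n d_4$ itself, rather than the mere finiteness count. Recall $d_3\prec_n d_4$ demands a single finite set $\{x_j\mid 1\le j\le n\}$ of distinct points such that $d_3$ and $d_4$ agree on every pair not contained in $\{x_j\}$. For arbitrary $d_3<d_4$ in $[\bot',\top']$ the set of pairs where they disagree could in principle involve many distinct points, so one must confirm that \emph{all} of $Y$ (or at least all points that ever appear in a disagreeing pair) can be taken as the witnessing set; taking $\{x_j\}$ to be the entire (finite) set $Y$ immediately satisfies the containment requirement, and $n=|Y|$ suffices, so the subtlety is purely in bookkeeping the value of $n$ and checking it is uniform across the interval. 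The remaining obstacle is confirming that the meets and joins of $L$'s image, when computed in $Met(Y)$, coincide with those computed in the ambient lattice and with the combinatorial meets/joins in $Eq(Y)$; this is precisely the content already established in the proof of Theorem~\ref{thm:anylattice}, where it was noted that the relevant meets and joins agree with the ambient operations, so I would invoke that verification rather than redo it.
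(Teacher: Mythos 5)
Your proposal is correct and follows essentially the same route as the paper's own proof: Pudl\'{a}k--T\r{u}ma to get $L\hookrightarrow Eq(Y)$ for finite $Y$, the map $\phi$ from Theorem~\ref{thm:anylattice}, and the observation that the image lands in the interval between the all-$1$ and all-$\alpha$ metric structures, which is $n$-elementary simply because $Y$ is finite. (Your only slip, that every element of $[\bot',\top']$ is $\{1,\alpha\}$-valued rather than $[1,\alpha]$-valued, is harmless since your $n$-elementarity check uses only the finiteness of $Y$, not the values.)
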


\begin{proof} We will embed $Eq(X)$ in an $n$-elementary interval, where $n = \frac{|X|(|X|-1)}{2}$. Let $d, d_\alpha \in Met(X)$ so that for all $x,y \in X$, $d(x,y) = 1<d_\alpha(x,y) = \alpha<2$. In particular, $d \prec_n d_\alpha$ and $[d,d_\alpha]$ is $n$-elementary. Just as with Theorem~\ref{thm:anylattice} we let $\phi:Eq(X)\rightarrow Met(X)$ so that ${\sim}\in Eq(X)$
 $\phi(\sim) = d_\sim \in Met(X)$ where

\[
d_\sim(a,b)=\begin{cases}
\alpha & \text{ if }  a\sim b \text{ and}\\
1 & \mbox{otherwise.}
\end{cases}
\]\\
Since any lattice embeds in the lattice $Eq(X)$ for some finite set $X$ and we've embedded $Eq(X)$ in an $n$-elementary interval, the proof is complete.

\end{proof}

\bibliographystyle{plain}
\bibliography{mybib2}

\end{document}

Next we define some covers for $X$ based on the following sets where $x\in X$ and $\epsilon>0$:

$$B_{L,\Delta}^{d}(x)_{\epsilon}=\{y\in X\mid\exists\mbox{ }\gamma:(x=y_{0},\ldots,y_{n}=y)\mbox{ with }\sum d(y_{i},y_{i+1})<\epsilon\},$$
 $$
 B_{R,\Delta}^{d}(x)_{\epsilon}=\{y\in X\mid\exists \mbox{ }\gamma:(y=y_{0},\ldots,y_{n}=x)\mbox{ with }\sum d(y_{i},y_{i+1})<\epsilon\},$$

$$ \quad B_L^{d}(x)_\epsilon=\{y\in X\mid d(x,y)<\epsilon\},$$

$$ \quad B_R^{d}(x)_\epsilon=\{y\in X\mid d(y,x)<\epsilon\}.$$
\smallskip

Let $d_L$ and $d_R$  be the collections $\{B_{L,\Delta}^{d}(x)_{\epsilon}\mid x\in X\mbox{ and }\epsilon>0\}$ and $\{B_{R,\Delta}^{d}(x)_{\epsilon}\mid x\in X\mbox{ and }\epsilon>0\}$ respectively. With the above in mind, define $W_0 \to Cov \times Cov$ so that $d \mapsto (d_L,d_R)$.
  As for $W_{0,\Delta}\rightarrow Cov \times Cov$, we let $d\mapsto(\{B_L^{d}(x)_\epsilon\}, \{B_R^{d}(x)_\epsilon\})$.

If, for instance, for some $d\in W_{0}$
we have $y\in B_{L,\Delta}^{d}(x)_\epsilon$, then clearly $G(d)(x,y)<\epsilon$
(where $G$ is left adjoint to $W_{0,\Delta} \hookrightarrow W_0$) and thus $y\in B_{L}^{G(d)}(x)_{\epsilon}$. Similarly, if $y\in B_{L}^{G(d)}(x)_{\epsilon}$
then $G(d)(x,y)<\epsilon$ and by definition $y\in B_{L,\Delta}^{d}(x)_\epsilon.$
Consequently, for any $d\in W_0$ we get that

\begin{figure}[H] 
\vspace{.2cm}
\centering 
\def\svgwidth{100pt} 
\input{Diagram3.pdf_tex}
 \end{figure}
commutes. 

In turn, since $Top \hookrightarrow Cov$ is an adjunction

\begin{figure}[H] 
\vspace{.2cm}
\centering 
\def\svgwidth{100pt} 
\input{Diagram4.pdf_tex}
 \end{figure}

commutes as well.

For any $d\in W_\emptyset$, consider the following candidates for an $\epsilon$-ball:\\

\begin{itemize}
\item $B_{L}(x)_{\epsilon}=\{y\in X\mid d(x,y)<\epsilon\}$, $B_{R}(x)_{\epsilon}=\{y\in X\mid d(y,x)<\epsilon\}$
\smallskip
 and $B_{\Sigma}(x)_{\epsilon}=B_{R}(x)_{\epsilon}\cup B_{L}(x)_{\epsilon}$.
\medskip
\item $B_{0,L}(x)_{\epsilon}=B_{L}(x)_{\epsilon}\cup{\{x\}}$ and $B_{0,R}(x)_{\epsilon}=B_{R}(x)_{\epsilon}\cup{\{x\}}$
\medskip
\item $B_{\Delta,L}(x)_{\epsilon}=$
$$\left\{ y\in X\mid{\exists}{\gamma:(x=y_{0},\ldots,y_{n}=y)} \mbox{ with }\sum d(y_{i},y_{i+1})<\epsilon\right\} $$
\item $B_{\Delta,R}(x)_{\epsilon}=$
$$\left\{ y\in X\mid{\exists}\gamma:(y=y_{0},\ldots,y_{n}=x) \mbox{ with }\sum d(y_{i},y_{i+1})<\epsilon\right\} $$
\item $B_{\Sigma,0}(x)_{\epsilon}=B_{\Sigma}(x)_{\epsilon}\cup{\{x\}}=B_{0}(x)_{\epsilon}\cup B_{R}(x)_{\epsilon}$
\medskip
\item $B_{\Delta,\Sigma}(x)_{\epsilon}=$
$$\left\{ y\in X\mid\exists\gamma:(x=y_{0}\ldots,y_{n}=y) \mbox{ with }\sum\text{min\{}d(y_{i+1},y_{i}),d(y_{i},y_{i+1})\}<\epsilon\right\} $$
\item $B_{\Delta,0,L}(x)_{\epsilon}=B_{\Delta,L}(x)_{\epsilon}\cup\{x\}$, $B_{\Delta,0,R}(x)_{\epsilon}=B_{\Delta,R}(x)_{\epsilon}\cup\{x\}$
\smallskip
and $B_{\Delta,\Sigma,0}(x)_{\epsilon}=B_{\Delta,\Sigma}(x)_{\epsilon}\cup\{x\}$\\
\end{itemize}


\begin{lemma} For $Q\subset P \subseteq \{0,\Delta,\Sigma\}$ and $G$ right adjoint to $W_P\hookrightarrow W_Q$ we have:

\begin{enumerate}[(a)]
\item If $S = P\smallsetminus Q$, then $B_{S,L}^d(x)_\epsilon = B_{L}^{G(d)}(x)_\epsilon$ and $B_{S,R}^d(x)_\epsilon = B_{R}^{G(d)}(x)_\epsilon$.
\item If $S = P\smallsetminus Q$, then

\begin{figure}[H] 
\vspace{.2cm}
\centering 
\def\svgwidth{100pt} 
\input{Diagram5.pdf_tex}
 \end{figure}
 commutes, provided for any $d\in W_Q$, $d \mapsto (\{B_{S,L}^d(x)_\epsilon\}, \{B_{S,R}^d(x)_\epsilon\})$ and $G(d) \mapsto(\{B_{L}^{G(d)}(x)_\epsilon\}, \{B_{R}^{G(d)}(x)_\epsilon\})$.
 \end{enumerate}
 \end{lemma}

 \begin{proof} Clearly $(a) \Rightarrow (b)$, and $(a)$ is clear whenever $S=\{0\}$ since we are only demanding for any $\epsilon-$ball about any point to contain the point itself. The example involving $W_0$ and $W_{0,\Delta}$ deals with the case where $S = \{\Delta\}$. For $S = \{\Sigma\}$ if we take any $d \in W_Q$ then for all $x,y \in X$, by design $G(d)(x) = \text{min}\{d(x,y) , d(y,x)\}$ and thus $B_{\Sigma}^d(x)_{\epsilon}=B^d_{R}(x)_{\epsilon}\cup B^d_{L}(x)_{\epsilon} = B^{G(d)}_\epsilon(x)$. Next, notice that $S = \{\Delta,0\}, \{\Sigma,0\}$ are immediate consequences of $S=\{\Delta\},\{0\}$ and $\{\Sigma\}$. To this end, we prove $(a)$ for $S=\{\Delta,\Sigma\}$ (and, without loss of generality, for only $B_{S,L}^{d}(x)_\epsilon$) and notice that $S=\{\Delta,\Sigma,0\}$ follows from it. Take any $d \in W_Q$. Since $d\geq G(d)$ then $B_{S,L}^d(x)_\epsilon \subseteq B_L^{G(d)}(x)_\epsilon = B_R^{G(d)}(x)_\epsilon = B_\epsilon^{G(d)}(x)$; symmetry ensures that right balls and left balls coincide with the standard balls (i.e., $B_\epsilon^{G(d)}(x)$). If $y \in B_\epsilon^{G(d)}(x) $ and since
 
 $$\displaystyle \epsilon >G(d)(x,y) = \bigwedge_{\gamma:(x=y_0,\ldots , y_n = y)} \sum_i \text{min}\{d(y_{i+1},y_{i}),d(y_{i},y_{i+1})\}$$
it follows that for some path $\gamma:(x=y_0, \ldots , y_n=y)$ we have
  
  $$ \sum_i \text{min}\{d(y_{i+1},y_{i}),d(y_{i},y_{i+1})\} < \epsilon$$
   and thus $y\in B_{S,L}^d(x)_\epsilon$.

\end{proof}

\begin{figure}[H] 
\vspace{.2cm}
\centering 
\def\svgwidth{300pt} 
\input{Diagram2.pdf_tex}
\vspace{.1cm}
\[
 \text{Left adjoints}
 \]
 \end{figure}
 
 \begin{figure}[H] 
\vspace{.2cm}
\centering 
\def\svgwidth{150pt} 
\input{BigDiagram.pdf_tex}
 \end{figure}